\documentclass[journal]{IEEEtran}
\usepackage{CJK}
\usepackage{amssymb}
\usepackage{epstopdf}
\usepackage{graphicx}
\usepackage{multirow}
\usepackage{diagbox}
\usepackage{algorithmicx,algorithm}
\usepackage{framed}
\usepackage{booktabs}
\usepackage{threeparttable}
\usepackage{comment}
\usepackage{amsmath}
\usepackage{amsthm}
\usepackage{xcolor}
\graphicspath{ {./images/} }

\newtheorem{thm}{Theorem}
\newtheorem{lem}{Lemma}

\newtheorem{pro}{Proposition}

\newtheorem{defi}{Definition}
\newtheorem{ass}{Assumption}

\newcommand {\emptycomment}[1]{}

\newcommand{\be }{\begin{equation}}
\newcommand{\ee }{\end{equation}}

\newcommand{\noi}{\noindent}

\newcommand{\wh}{\widetilde}

\newcommand{\f}{\frac}

\newcommand{\nn}{\langle}
\newcommand{\mm}{\rangle}

\newcommand{\huaX}{\mathcal{X}}

\newcommand{\mbb}{\mathbb}

\newcommand{\ooo}{\omega}

\def\bea{\begin{eqnarray}}
\def\eea{\end{eqnarray}}
\def\be{\begin{equation}}
\def\ee{\end{equation}}
\def\blm{\begin{lem}}
\def\elm{\end{lem}}
\def\btm{\begin{theorem}}
\def\etm{\end{theorem}}

\newcommand{\huaB}{\mathcal{B}}

\newcommand{\huaL}{\mathcal{L}}
\newcommand{\huaR}{\mathcal{R}}
\newcommand{\huaE}{\mathcal{E}}

\newcommand{\huaG}{\mathcal{G}}

\newcommand{\huaU}{\mathcal{U}}
\newcommand{\huaV}{\mathcal{V}}
\newcommand{\huaW}{\mathcal{W}}
\newcommand{\huaY}{\mathcal{Y}}

\newcommand{\huaP}{\mathcal{P}}
\newcommand{\huaC}{{\mathcal{C}}}

\newcommand{\huaH}{\mathcal{H}}

\newcommand{\huaO}{\mathcal{O}}
\newcommand{\huaT}{\mathcal{T}}

\newcommand{\huaM}{\mathcal{M}}

\def\bea{\begin{eqnarray}}
\def\eea{\end{eqnarray}}
\def\be{\begin{equation}}
\def\ee{\end{equation}}
\def\blm{\begin{lem}}
\def\elm{\end{lem}}

\def\bea{\begin{eqnarray}}
	\def\eea{\end{eqnarray}}
\def\be{\begin{equation}}
	\def\ee{\end{equation}}
\def\blm{\begin{lem}}
	\def\elm{\end{lem}}
\def\bes{\begin{eqnarray*}}
	\def\ees{\end{eqnarray*}}
\def\beal{\begin{aligned}}
	\def\eeal{\end{aligned}}

\def\wh{\widehat}

\def\ww{\widetilde}
\def\sss{\sigma}
\def\tb{\textbf}
\def\mb{\mathbf}

\newtheorem{theorem}{Theorem}

\hyphenation{op-tical net-works semi-conduc-tor}









\begin{document}

\title{Generic Frameworks for Distributed Functional Optimization and Learning over Time-Varying Networks}
\author{Zhan Yu, Zhongjie Shi, Deming Yuan, \IEEEmembership{Senior Member, IEEE}, Daniel W. C. Ho, \IEEEmembership{Life Fellow, IEEE}\thanks{
		Z. Yu is with the Department of Mathematics, Hong Kong Baptist University. Z. Shi is with the School of Mathematics, Georgia Institute of Technology, Atlanta, GA 30332. D. W. C. Ho is with the Department of Mathematics, City University of Kong Kong. D. Yuan is with the School of Automation, Nanjing University of Science and Technology (dmyuan1012@gmail.com). (\emph{Corresponding Author: Deming Yuan})}}
\maketitle


\begin{abstract}
In this paper, we establish a distributed functional optimization (DFO) theory over time-varying  networks. The vast majority of existing distributed optimization theories are developed based on Euclidean decision variables. However, for many scenarios in machine learning and statistical learning, such as reproducing kernel spaces or probability measure spaces that use functions or probability measures as fundamental variables, the development of existing distributed optimization theories exhibit  obvious theoretical and technical deficiencies. This paper addresses these issues by developing a novel general DFO theory on Banach spaces, allowing functional learning problems in the aforementioned scenarios to be incorporated into our framework for resolution. We study both convex and nonconvex DFO problems and rigorously establish a comprehensive convergence theory of distributed functional mirror descent and distributed functional gradient descent algorithm to solve them.  Satisfactory convergence rates are fully derived. The work has  provided  generic analyzing frameworks for DFO. The established theory is shown to have crucial application value in the kernel-based distributed learning theory over networks.

\end{abstract}
\begin{IEEEkeywords}
	distributed optimization, functional optimization, Banach space, multi-agent system, mirror descent, reproducing kernel Hilbert space
\end{IEEEkeywords}

\section{Introduction}
 Since Nedi\'c and Ozdaglar's foundational work \cite{no2009}, multi-agent distributed  optimization (MA-DO) has played a crucial role in  multiple fields, including modern control theory, optimization theory, and networked systems in the past fifteen years \cite{no2009}-\cite{yhl2016}. MA-DO utilizes a multi-agent network structure, endowing the entire algorithmic framework with a highly flexible node information exchange and collaboration mechanisms that traditional centralized algorithms lack. Moreover, in contrast to classical centralized methods, MA-DO offers greater scalability and resilience, making them more suitable for dynamic and uncertain environments. Meanwhile, MA-DO also plays an important role in privacy protection in data-based distributed supervised learning problems. These benefits position DO as an essential basis for developing effective algorithms in modern control and optimization society.

 Despite the revolutionary advancements and progress achieved by MA-DO, we observe a fundamental fact: the vast majority of distributed optimization algorithm theories are constructed based on Euclidean decision spaces \cite{no2009}-\cite{yhl2016}. Accordingly, their related decision variables/state variables are obviously Euclidean vectors. However, numerous examples in machine learning, statistical learning, and neural-network-based deep learning indicate that the learning theory focused on approximating functions defined in some typical function spaces that are essentially infinite-dimensional (such as reproducing kernel Hilbert space (RKHS) $H_K$, Lebesgue space $L^p$, Sobolev space $W_p^s$, Besov space $B_{p,q}^s$ for some integrability index $p$, $q$ and regularity index $s$) have gradually become dominant in these fields (e.g. \cite{yfsz2024}-\cite{shs2001}). For example, in kernel-based learning theory, in an RKHS ($H_K$, $\|\cdot\|_{H_K}$) induced by a Mercer kernel $K$, the least squares regression problem over RKHS often requires us to obtain a functional estimator by minimizing a data-based nonlinear convex functional 
 in an RKHS ball 
 or the whole RKHS (\cite{yfsz2024}, \cite{kbp2024},  \cite{ghs2018}, \cite{ghw2020}, \cite{gsw2017}). 
 In such models, the learning target is a functional defined on infinite-dimensional function space, in significant contrast to the classical settings that MA-DO handles. This reality also imposes significant bottlenecks on the applicability of traditional DO algorithms in functional learning and variational problems. Current  MA-DO methods for these scenarios are still quite limited. As these fields increasingly rely on complex function spaces, the constraints of classical MA-DO methods hinder their effectiveness and adaptability, necessitating the development of more appropriate approaches that can accommodate these advanced learning frameworks. 

 In this work, with the aid of time-varying multi-agent systems, we will investigate a new distributed functional optimization and learning problem framework over general Banach spaces. Many problem instances in modern machine learning and statistical learning can be categorized within our  multi-agent distributed functional optimization (MA-DFO) framework. Here, we consider the MA-DFO problem, which aims at minimizing a global functional $\mb J$ defined on some Banach spaces $\huaB$ by utilizing a multi-agent system/network modeled by a time-varying graph with a node set indexed by $i=1,2,...,m$.
 Our problem formulation can be described as a distributed functional optimization (DFO) model 
 $\min_{f\in\huaW}\tb J(f):=\sum_{i=1}^m\tb J_i(f)$ 
 where $\huaW$ is a closed convex set of Banach space $\huaB$. Here, for some practical considerations such as privacy protection,  we handle the case that the functional $\mb J_i$ is known only to the local agent $i$ of the networked system. It should be noted that the decision variables here can be basic Euclidean vectors, a situation that has been extensively studied in existing work \cite{no2009}-\cite{yhl2016}. Our work focuses on the case where $f$ belongs to the typical function spaces or measure spaces mentioned above. For this case, the DFO problem framework is sensible since in real-world applications, there are many scenarios that data are stored  across an interconnected network, such as the  robotic team systems and sensor networks (see e.g.  \cite{ks2004}). In such setting, each agent $i$ has only access to a local data set $D_i=\{(x_{i,s},y_{i,s})\}_{s=1}^{n_i}$ and collaboratively learn a global regression function with a general loss functional $\huaL$ via the supervised learning framework
 $f^*=\arg\min_{\|f\|_\huaB\leq R}\sum_{i=1}^m\f{1}{n_i}\sum_{s=1}^{n_i}\huaL(f(x_{i,s}),y_{i,s})$.
Here, the decision domain $\{f\in\huaB:\|f\|_\huaB\leq R\}$ can be an RKHS ball, Sobolev ball or Besov ball for different machine learning tasks. This model can naturally be treated as a special case of our general DFO problem framework. The idea of each agent managing a subset of data has been around for some time, particularly in federated learning and distributed kernel-based learning algorithms that employ a divide-and-conquer approach (see e.g., \cite{yfsz2024}, \cite{ghw2020}, \cite{gsw2017}). Due to privacy concerns, the global dataset often must be partitioned into disjoint sub-datasets for processing by multiple agents. Additionally, there are situations where data are inherently stored across multiple local agents in a distributed manner, without being combined at the outset due to concerns about privacy and the need to minimize potential costs, as widely observed in distributed/federated learning, financial markets, and medical systems.

This work will comprehensively address both   convex and nonconvex DFO. For convex DFO on Banach spaces, inspired by the remarkable success achieved by mirror descent approaches to handling different non-Euclidean scenarios (see e.g. \cite{yhhx2020}-\cite{xzhyx2022}, \cite{kbp2024}, \cite{akl2022}, \cite{lz2018}, \cite{hljl2025}), we will employ mirror descent structure  and propose a distributed functional mirror descent (DFMD) algorithm in the framework of general Banach space $\huaB$ to solve the DFO problem. Additionally, noting that in many learning tasks, to address specific learning problems, the loss function often appears in a nonconvex form. Typical examples include, for example, Sigmoid loss, truncated quadratic loss in classification problems and Cauchy loss, Welsch loss in robust learning problems and  correntropy-induced loss, and minimum-error-entropy loss in  information-theoretic
learning (see e.g.  \cite{ghs2018}, \cite{ghw2020}).  These nonconvex loss functionals play a crucial role in the corresponding learning scenarios. However, effective existing multi-agent distributed optimization theories specifically for such nonconvex loss functionals are clearly lacking. This makes the development of appropriate multi-agent nonconvex DFO highly significant.
 We will explore the nonconvex DFO problem in the framework of general Hilbert space $\huaH$ by introducing a distributed functional gradient descent (DFGD) method.

We summarize the  contributions of this paper as follows.
\begin{itemize}
	\item We rigorously formulate a novel time-varying multi-agent DFO framework on general Banach spaces that are essentially infinite-dimensional. To solve DFO for convex functional, we establish the convergence theory of DFMD and shows that it is able to achieve an optimal ergodic convergence rate of $\huaO(1/\sqrt{T})$. Moreover, it is worth mentioning that, in our analytical framework, even for the most general case of Banach spaces, obtaining this convergence rate does not require any  boundedness assumptions for the decision spaces, which are necessary theoretical prerequisites in the vast majority of existing work related to mirror descent approaches, even for Euclidean optimization, including \cite{yhhx2020}-\cite{xzhyx2022}.
	
	\item To our knowledge, a time-varying multi-agent distributed functional nonconvex optimization framework over Hilbert spaces is first proposed. The convergence theory of DFGD is comprehensively established. We show that under appropriate stepsize rules, a  convergence rate of $\huaO(1/\sqrt{T})$ for the local ergodic sequence  can be derived under mild conditions. Moreover, for the local sequence of the last iteration associated with any agent, we further obtain an $R$-linear convergence rate up to a solution level that is proportional to stepsize.
	
	\item As crucial byproducts of the theory developed in this work, we show that the kernel-based learning theory on the minimization problem of the expected risk of prediction function in the RKHS framework and the variational problems over Radon measure spaces can be well transformed to be handled in our time-varying MA-DFO framework. As a result, we have established generic DFO algorithmic and theoretical frameworks that are widely applicable to various scenarios in control, machine learning, and statistical learning. The research also provides a starting point for studying DFO theory using time-varying multi-agent networks, triggering one of the future directions of DO on Banach spaces and related non-Euclidean spaces.
		\end{itemize}

\noi \textbf{Notation:} For a Banach  space ($\huaB$, $\|\cdot\|_\huaB$). We use ($\huaB^*$, $\|\cdot\|_{\huaB^*}$) to denote its dual space, namely, the Banach space of bounded/continuous linear functionals on $\huaB$. Accordingly, for a functional $\mb F:\huaB\rightarrow\mbb R$, $\|\mb F\|_{\huaB^*}=\sup_{\|f\|_{\huaB}\leq1}\mb F(f)$, $f\in\huaB$.
For a matrix $M\in\mathbb R^{m\times m}$, denote the element in $i$th row and $j$th column by $[M]_{ij}$. For an $n$-dimension Euclidean vector $x$, denote its $i$-th component by $[x]_i$, $i=1,2,...,n$.  

\section{Problem setting}
In this paper, we aim at solving the DFO problem 
\bea
\min_{f\in\huaW}\tb J(f):=\sum_{i=1}^m\tb J_i(f)  \label{problem_formulation}
\eea
over Banach spaces, where $\huaW$ is a closed convex set of a real Banach space $\huaB$ that can be unbounded in the sense of the metric induced by the Banach space norm.  This paper addresses this DFO problem by utilizing a time-varying multi-agent network. The agents of the multi-agent network are indexed by  $i=1,2,..., m$. In our model, the functional $\mb J_i$ is known only to one local agent of the multi-agent system. The communication topology among the agents is modeled as a time-varying graph $\huaG_t=(\huaV,\huaE_t, P_t)$, where $\huaV=\{1,2,..., m\}$ is the node set of the multi-agent system,  $\huaE_t$ is
the set of activated links (the edge set) at time $t$. $P_t$ is the communication matrix associated with the graph $\huaG_t$ such that $[P_t]_{ij}>0$ if $(j,i)\in\huaE_t$ and $[P_t]_{ij}=0$ otherwise. Accordingly, $\huaE_t$ can be described as $\huaE_t=\{(j,i)\in\huaV\times\huaV|[P_t]_{ij}>0\}$. We refer to $\{j\in\huaV|(j,i)\in\huaE_t\}$ as the neighbor set of agent $i\in\huaV$ at time instant $t$. We require these local agents collaboratively to solve the DFO problem via appropriate local communications over Banach spaces. We suppose that there is at least an optimal element $f^*\in\huaB$ such that $\mb J(f)\geq \mb J(f^*)$ for any $f\in\huaB$.

Now, we describe the network environment. In this paper, we establish our theory based on the following class of graphs.
\begin{ass}\label{graph_ass} The communication matrix $P_t$ is a doubly stochastic matrix, $i.e.$, $\sum_{i=1}^m[P_t]_{ij}=1$ and $\sum_{j=1}^m[P_t]_{ij}=1$ for any $i$ and $j$. There exists an integer $B\geq1$ such that the graph $(\huaV, \huaE_{kB+1}\cup\cdots\cup\huaE_{(k+1)B})$ is strongly connected for any $k\geq0$. There exists a scalar $0<\zeta<1$ such that $[P_t]_{ii}\geq \zeta$ for all $i$ and $t$, and $[P_t]_{ij}\geq\zeta$ if $(j,i)\in \huaE_t$.
\end{ass}


Let $\tb F:\huaB\rightarrow\mbb R$ be a functional defined on a closed convex set $\huaW$ of Banach space ($\huaB$, $\|\cdot\|_\huaB$) (a complete normed vector space), if for any $f,g\in\huaW$ and $\lambda\in[0,1]$, there holds, $\tb J(\lambda f+(1-\lambda)g)\leq\lambda\tb J(f)+(1-\lambda)\tb J(g)$, then we call $\tb J$ a convex functional on $\huaW\subseteq\huaB$. For the Banach space $\huaB$, this work can cover the most typical function spaces for the scenarios of learning theory, statistical learning, and deep learning, such as reproducing kernel Hilbert space ($H_K$,$\|\cdot\|_{H_K}$) induced by a Mercer $K$, the continuous function space ($C(\huaX)$,$\|\cdot\|_{C(\huaX)}$) defined on some compact metric space $\huaX$, the well-known fundamental ($L^p$, $\|\cdot\|_{L^p}$) Lebesgue function space or Sobolev space $(W_p^s,\|\cdot\|_{W_p^s})$. It is worth noting that in the existing DO frameworks based on multi-agent systems, the theoretical work on algorithms established based on these typical Banach spaces is still quite scarce.

Now, we start to present our theoretical framework. First, we  provide the definition of the functional derivative. The following definitions give the concepts of G\^ateaux differentiability and Fr\'echet differentiability (\cite{luenberger1997}, \cite{Zalinescu2002}, \cite{Dieudonne_book}).
\begin{defi}\label{gateaux_def}
	Let $\huaX$ and $\huaY$ be  locally convex topological vector spaces (e.g., Banach spaces). $\huaU\subseteq\huaX$ is open, $\mb F:\huaU\rightarrow \huaY$ is a nonlinear mapping. Let $f\in\huaU\subseteq\huaB$ and $h\in\huaB$ be arbitrary elements in $\huaB$. The G\^ateaux differential $\partial_f\mb F(h)$ of $\mb F$ at $f\in\huaU$ in the direction $h\in\huaX$ is defined as
	$\partial_f\mb F(h)=\lim_{\tau\rightarrow0}\f{\mb F(f+\tau h)-\mb F(f)}{\tau}$.
	If the limit exists for all $h$, then one says that $\mb F$ is G\^ateaux differentiable at $f$.
\end{defi}

\begin{defi}\label{frechet_def}
	Let $\huaX$ and $\huaY$ be normed vector spaces. Let $\huaU\subseteq\huaX$ be an open set of $\huaX$. A mapping $\mb F:\huaU\rightarrow\huaY$ is called Fr\'echet differentiable at $f\in\huaU$ if there exists a bounded linear operator $\mb A:\huaX\rightarrow\huaY$ such that
	$\lim_{\|h\|_\huaX\rightarrow0}\f{\|\mb F(f+h)-\mb F(f)-\mb A h\|_\huaY}{\|h\|_\huaX}=0$.
	Such operator $\mb A$ is unique, accordingly we denote $\mb D_f \mb F=\mb A$, and call $\mb D_f\mb F$ the Fr\'echet derivative of $\mb F$ at $f$.
\end{defi}

We recall that, according to the definition of the dual space $\huaB^*$ of $\huaB$, an element $\mb F\in\huaB^*$ induces a natural action on any element of $\huaB$. In the subsequent analysis of this paper, we denote such action via a bilinear operator induced by the dual bracket $\nn\cdot,\cdot\mm_{\huaB\times\huaB^*}:\huaB\times \huaB^*\rightarrow\mbb R$. Namely, 
$\mb F(f)=\nn f,\mb F\mm_{\huaB\times\huaB^*}$.
Next, we give the notion of a subgradient set of a functional at certain element of Banach space.
\begin{defi}
	For a functional $\mb F:\huaB\rightarrow \mbb R$, its set of subgradients at any $f_0\in\huaB$ is defined as 
	$\partial_{f_0}\mb F=\{g\in\huaB^*:\forall f\in\huaB, \mb F(f)-\mb F(f_0)\geq\nn f-f_0,g\mm_{\huaB\times\huaB^*}\}$.
\end{defi}

It is worth mentioning that, for a convex functional $\mb F$, if it exists G\^ateaux differential at some element $f\in\huaB$, then the subgradient set has a unique element $\partial_f \mb F$, namely the G\^ateaux differential of the functional $\mb F$.  Throughout this paper, we will continue to use the notion $\partial_f \mb F$ to represent a randomly selected subgradient $g\in\partial_f \mb F$. 

The following definition describes the notion of Lipschitz condition for a convex functional.
\begin{defi}
	A convex functional $\tb F:\huaB\rightarrow\mbb R$ is $G$-Lipschitz on a subset $\huaU\subseteq\huaB$ with respect to $\|\cdot\|_\huaB$ if for any $f\in\huaU$ and subgradient $g\in\partial_f \tb F$, it holds that $\|g\|_{\huaB^*}\leq G$. 
\end{defi}

Now, we are ready to state the following assumption related to Lipschitz's condition of the local functionals.
\begin{ass}\label{gradient_ass}
	Local objective function $\mb J_i$, $i\in\huaV$ is $G$-Lipschitz in $\huaW\subseteq\huaB$, $i.e.$ $\left\|\partial_f\mb J_i\right\|_{\huaB^*}\leq G$, $f\in\huaW$.
\end{ass}

For developing the nonconvex DFO theory, we also require the following notions on $L$-smoothness of a functional.
\begin{defi}\label{Lsmooth_def}
	A functional $\mb F$ is $L$-smooth if for all $f_0\in\huaB$, there exists $g\in\partial_{f_0}\mb F$ such that 
	$\mb F(f)-\mb F(f_0)\leq \nn f-f_0,g\mm_{\huaB\times\huaB^*}+\f{L}{2}\|f-f_0\|_\huaB^2, \ \forall f\in\huaB$.
\end{defi}
From Corollary 3.5.7 in \cite{Zalinescu2002}, we know that the above inequality implies that, if a functional $\mb F$ is Fr\'echet differentiable and $L$-smooth, then we also have 
$\|\mb D_f\mb F-\mb D_{f'}\mb F\|_{\huaB^*}\leq L\|f-f'\|_\huaB$.

In this paper, without loss of generality, we consider the setting that $\huaB$ is a reflexive Banach space, namely, the natural embedding $\huaB\rightarrow\huaB^{**}$ is an isometric isomorphism ($\huaB\cong\huaB^{**}$), where $\huaB^{**}=(\huaB^*)^*$ is the bidual space of $\huaB$. Moreover,  we consider that the domain $\huaW\neq\emptyset$ and $\mb J_i(f)>-\infty$ for any $f\in\huaB$. Hence, $\mb J_i$, $i\in\huaV$ are proper. In the first part of this work, we consider the case that the functionals $\mb J_i$, $i\in\huaV$ are strictly convex on a closed convex set $\huaW\subseteq\huaB$ ($\huaW$ can be unbounded). In the second part, we consider the case that $\mb J_i$ are Fr\'echet differentiable and can be nonconvex in the setting that the underlying space is a Hilbert space $\huaB=\huaH$. These two scenarios can encompass many core problem formulations in the fields of modern control and machine learning, which have been presented earlier and will be illustrated with typical examples later (see Section \ref{section5}).

This paper proposes to establish a distributed function mirror descent (DFMD) framework for solving the convex DFO problem at first. To this end, we need to introduce the notion of  mirror map at the beginning.
\begin{defi}
	We call a functional $\mb \Psi$  mirror map if it satisfies: $\mb \Psi$ is strictly convex and the subgradient sets of $\partial_{(\cdot)}\mb \Psi$ does not intersect at non-empty set and $\partial_{(\cdot)}\mb \Psi(\huaB)=\huaB^*$. 
\end{defi}

 To avoid certain extreme and trivial cases, throughout the paper, we will select a proper and coercive mirror map $\mb \Psi$ that is G\^ateaux differentiable and $\sigma_{\mb \Psi}$-strongly convex. Here, by saying the mirror map $\mb \Psi$ is $\sigma_{\mb \Psi}$-strongly convex, we mean for all $f_0\in\huaB$, there exists $g\in\partial_{f_0}\mb \Psi$ such that 
 \bes
 \mb \Psi(f)-\mb \Psi(f_0)\geq \nn f-f_0,g\mm_{\huaB\times\huaB^*}+\f{\sigma_{\mb \Psi}}{2}\|f-f_0\|_\huaB^2, \ \forall f\in\huaB.
 \ees 
 In the existing mirror descent theory based on Euclidean decision variables, strongly convex mirror maps have been widely adopted \cite{yhhx2020}-\cite{xzhyx2022}. The following result is basic in terms of establishing our convergence theory (see e.g. \cite{kbp2024}).
\begin{lem}\label{bijective_lemma}
	Let $\mb F$ be a proper, strictly convex, and G\^ateaux differentiable functional. Then the operator $\partial_{(\cdot)}\mb F:\huaB\rightarrow\huaB^*$ is both injective and subjective. Here, for any $f\in\huaB$, $f\rightarrow\partial_{(\cdot)}\mb F(f):=\partial_f\mb F\in\huaB^*$.
\end{lem}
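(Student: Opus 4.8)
The plan is to establish injectivity and surjectivity of the map $f \mapsto \partial_f \mb F \in \huaB^*$ separately, relying on strict convexity for the former and on coercivity-type arguments (via Fenchel conjugation) for the latter. First I would prove \textbf{injectivity}. Suppose $f_1 \neq f_2$ with $\partial_{f_1}\mb F = \partial_{f_2}\mb F =: g$. Using the subgradient inequality at $f_1$ in the direction of $f_2$ and at $f_2$ in the direction of $f_1$, one gets $\mb F(f_2) - \mb F(f_1) \geq \nn f_2 - f_1, g\mm_{\huaB\times\huaB^*}$ and $\mb F(f_1) - \mb F(f_2) \geq \nn f_1 - f_2, g\mm_{\huaB\times\huaB^*}$; adding them forces equality throughout, so $\mb F$ is affine along the segment $[f_1,f_2]$, i.e. $\mb F(\lambda f_1 + (1-\lambda) f_2) = \lambda \mb F(f_1) + (1-\lambda)\mb F(f_2)$ for $\lambda \in [0,1]$. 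This contradicts strict convexity. Hence the map is injective.

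For \textbf{surjectivity}, the cleanest route is through the Fenchel conjugate $\mb F^{*}:\huaB^{*}\to\mbb R\cup\{+\infty\}$, $\mb F^{*}(g) = \sup_{f\in\huaB}\{\nn f,g\mm_{\huaB\times\huaB^*} - \mb F(f)\}$. Fix an arbitrary $g \in \huaB^*$; I would show the supremum defining $\mb F^{*}(g)$ is attained at some $f_g \in \huaB$, and that attainment is equivalent to $g \in \partial_{f_g}\mb F$ (the Fenchel–Young equality case). Attainment follows from a standard direct-method argument: since $\mb F$ is proper, convex and (being G\^ateaux differentiable and finite-valued) lower semicontinuous, the functional $f \mapsto \mb F(f) - \nn f,g\mm_{\huaB\times\huaB^*}$ is weakly lower semicontinuous; since $\mb F$ is coercive, this functional is coercive as well (the linear term is dominated); and since $\huaB$ is reflexive, bounded sequences have weakly convergent subsequences, so a minimizer $f_g$ exists. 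The optimality condition $0 \in \partial_{f_g}\big(\mb F(\cdot) - \nn \cdot,g\mm_{\huaB\times\huaB^*}\big)$ rewrites as $g \in \partial_{f_g}\mb F$; because $\mb F$ is strictly convex and G\^ateaux differentiable, $\partial_{f_g}\mb F$ is the singleton $\{\partial_{f_g}\mb F\}$, so in fact $\partial_{f_g}\mb F = g$. This exhibits a preimage of $g$, proving surjectivity.

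The main obstacle I anticipate is making the surjectivity argument fully rigorous in the general Banach-space setting rather than a Hilbert space: one must carefully justify that G\^ateaux differentiability plus convexity yields (weak, hence norm) lower semicontinuity, that coercivity of $\mb F$ survives subtraction of the bounded linear functional $\nn \cdot, g\mm_{\huaB\times\huaB^*}$, and that reflexivity is exactly what licenses the weak compactness extraction in the direct method. A secondary technical point is verifying the equivalence between "$f_g$ attains the conjugate" and "$g$ is a subgradient at $f_g$", i.e. the Fenchel–Young equality characterization; this is routine but should be stated. If one prefers to avoid conjugation, an alternative is to invoke a monotone-operator surjectivity theorem (the subdifferential of a proper lsc convex function is maximal monotone, and a coercive maximal monotone operator on a reflexive Banach space is surjective onto $\huaB^{*}$), but the direct variational argument above is more self-contained and I would present that. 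Either way, the typo in the statement ("subjective" for "surjective") should be silently corrected.
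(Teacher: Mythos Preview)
The paper does not actually supply a proof of this lemma; it is stated as a basic known fact with a citation to \cite{kbp2024} (``The following result is basic in terms of establishing our convergence theory (see e.g.\ \cite{kbp2024})''). So there is no ``paper's proof'' to compare against, and your write-up would serve as the only argument.

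Your injectivity argument is the standard one and is correct. Your surjectivity argument via the direct method (reflexivity $+$ weak lower semicontinuity $+$ coercivity $\Rightarrow$ existence of a minimizer of $f\mapsto \mb F(f)-\nn f,g\mm_{\huaB\times\huaB^*}$, whose first-order condition reads $\partial_{f_g}\mb F=g$) is also the right approach. One point to make explicit: the lemma \emph{as literally stated} assumes only proper, strictly convex, and G\^ateaux differentiable, and under those hypotheses alone surjectivity is false (take $\mb F(x)=e^x$ on $\mbb R$; then $\mb F'(\mbb R)=(0,\infty)\neq\mbb R$). You are tacitly importing coercivity, which the paper does assume as a standing hypothesis on the mirror map in the sentence immediately preceding the lemma, but it is not listed among the lemma's hypotheses. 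You should say clearly that you are invoking this additional standing assumption (or, equivalently, that the lemma's hypotheses should be read as including it), otherwise the surjectivity half is unprovable. The remaining technical checks you flag (weak lsc from convexity plus G\^ateaux differentiability, and the Fenchel--Young equality characterization of attainment) are routine and your outline handles them adequately.
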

Now, we can provide the definition of the Bregman divergence.
\begin{defi}
	For a real-valued convex functional $\tb F:\huaB\rightarrow \mbb R$, the Bregman divergence $D_{\tb F}$ with respect to $\tb F$ is defined as
	\bes
	D_{\tb F}(f_1\|f_2):=\tb F(f_1)-\tb F(f_2)-\left\nn f_1-f_2,\partial _{f_2}\tb F\right\mm_{\huaB\times\huaB^*}
	\ees
	where $\partial_{(\cdot)}\tb F:\huaB\rightarrow\huaB^*$ denotes the sub-differential of the functional $\tb F$.
\end{defi}
The above definition indicates an obvious three-point identity. Namely, for any $f,g,h\in\huaB$, there holds
\bes
\left\nn f-h,\partial_f\mb F-\partial_g \mb F\right\mm_{\huaB\times\huaB^*}=D_{\mb F}(f\|g)+D_{\mb F}(h\|f)-D_{\mb F}(h\|g).
\ees
Now, we can give the notion of the projection map, this map is important to represent our main DFMD algorithm. For a mirror map $\mb \Psi$, the projection map $\Pi_{\huaW,\mb \Psi}$ is defined as 
\bes
\Pi_{\huaW,\mb \Psi}(f')=\arg\min_{f\in\huaW}D_{\mb \Psi}(f\|f').
\ees
Basic functional convex optimization theory indicates that, in a reflexive Banach space $\huaB$, a coercive, strongly convex functional has a unique global minimum on any closed convex set of $\huaB$ (e.g., \cite{luenberger1997}, \cite{Zalinescu2002}). This result ensures the meaningfulness of the above projection map and the rigorism of the theory that follows. The following is a basic assumption on separate convexity. Separate convexity is a widely considered assumption in the literature on DMD algorithms, as seen in works such as  \cite{yhhx2020}, \cite{md1}, \cite{lj2021}. The following assumption provides a generalized version of separate convexity on Banach spaces.
\begin{ass}\label{Bregman_convexity_ass}
	For any $f\in\huaB$, the Bregman divergence as a functional of the second variable $D_{\mb\Psi}(f\|\cdot)$ is assumed to satisfy the separate convexity on the second variable, namely, for any $\sum_{j=1}^ma_j=1$, $a_j\geq 0$, it holds that $D_{\mb \Psi}(f\| \sum_{j=1}^ma_jg_j)\leq\sum_{j=1}^ma_jD_{\mb \Psi}(f\|g_j)$, $g_j\in\huaB, j\in\huaV$.
\end{ass}

\section{Distributed functional convex optimization over general Banach spaces}\label{DFMD}

 In this section, we study the convex DFO problem, and $\mb J_i$, $i\in\huaV$ are supposed to be G\^ateaux differentiable convex functionals. Considering the tremendous power of the mirror descent framework in dealing with non-Euclidean scenarios, we plan to utilize a distributed functional mirror descent (DFMD) method to solve distributed functional convex optimization over general Banach spaces. Start with an initialization $f_{i,1}$, $i\in\huaV$, the DFMD is given as follows: 
\bea
\textbf{DFMD}\left\{
\begin{aligned}
	g_{i,t}=&\partial_{f_{i,t}}\mathbf \Psi,\\
	h_{i,t}=&g_{i,t}-\eta\partial_{f_{i,t}}\tb J_i,\\
	\wh f_{i,t+1}=&\Pi_{\huaW,\mathbf \Psi}\left((\partial \mathbf \Psi)^{-1}(h_{i,t})\right),\\
	f_{i,t+1}=&\sum_{j=1}^m[P_t]_{ij}\wh f_{j,t+1}.
\end{aligned}   \label{DFMD_algorithm}
\right. 
\eea
We briefly describe the mechanism of DFMD here. For each local agent $i\in\huaV$, after obtaining the initial $f_{i,1}$ through initialization. The algorithm DFMD utilizes the mirror map $\mb\Psi$  to map the initial value to the dual space $\huaB^*$ of the Banach space $\huaB$ through the operator $\partial_{(\cdot)}\mb \Psi$. Then, DFMD performs a gradient descent update with stepsize $\eta>0$ to obtain $h_{i,t}$ over the dual Banach space $\huaB^*$. In what follows, by utilizing the projection map $\Pi_{\huaW,\mb \Psi}$, $h_{i,t}$ is pulled back to the original Banach space, resulting in $\wh f_{i,t}\in\huaB$. Finally, these $\wh f_{i,t}$ communicate with their neighbors via the communication matrix $P_t$ to obtain $f_{i,t+1}$, completing an iteration step.

Before establishing our convergence theory, we emphasize that, for the sake of simplification, we consider the constant step size strategy with $\eta>0$ throughout the paper. For the case of a strictly decreasing stepsize strategy, the corresponding convergence results can be obtained by following a similar approach, and it might only require more computational steps, especially in the non-convex DFO section of this paper.

Now, we begin to establish the convergence theory of the DFMD algorithm. The following proposition is the foundation of the convergence result of DFMD.

\begin{pro}
	Under Assumption \ref{gradient_ass}, there holds that 
	\bes
	\left\|\wh f_{i,t+1}-f_{i,t}\right\|_\huaB\leq\f{G}{\sigma_{\mb\Psi}}\eta.
	\ees
\end{pro}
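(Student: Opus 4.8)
The plan is to compare $\wh f_{i,t+1}$ and $f_{i,t}$ through the Bregman divergence with respect to the mirror map $\mb\Psi$ and then exploit $\sigma_{\mb\Psi}$-strong convexity to turn the divergence bound into a norm bound. First I would observe that $\wh f_{i,t+1}=\Pi_{\huaW,\mb\Psi}\big((\partial\mb\Psi)^{-1}(h_{i,t})\big)$ is the Bregman projection onto $\huaW$ of the point $u_{i,t}:=(\partial\mb\Psi)^{-1}(h_{i,t})$, i.e. $\partial_{u_{i,t}}\mb\Psi=h_{i,t}=g_{i,t}-\eta\,\partial_{f_{i,t}}\tb J_i$. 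Since $f_{i,t}\in\huaW$ (it is produced by a convex combination of earlier iterates in $\huaW$, using Assumption \ref{graph_ass} that $P_t$ is stochastic and that $\huaW$ is convex), the first-order optimality condition for the projection gives $\big\langle f_{i,t}-\wh f_{i,t+1},\,\partial_{\wh f_{i,t+1}}\mb\Psi-\partial_{u_{i,t}}\mb\Psi\big\rangle_{\huaB\times\huaB^*}\geq 0$, which is the standard Bregman projection inequality.

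Next I would write down the three-point identity stated in the excerpt, namely
\bes
\big\langle f_{i,t}-\wh f_{i,t+1},\,\partial_{\wh f_{i,t+1}}\mb\Psi-\partial_{u_{i,t}}\mb\Psi\big\rangle_{\huaB\times\huaB^*}=D_{\mb\Psi}(f_{i,t}\|\wh f_{i,t+1})+D_{\mb\Psi}(\wh f_{i,t+1}\|u_{i,t})-D_{\mb\Psi}(f_{i,t}\|u_{i,t}).
\ees
Wait — I need the identity with the correct slots; the version to invoke is $\langle a-c,\partial_a\mb\Psi-\partial_b\mb\Psi\rangle = D_{\mb\Psi}(a\|b)+D_{\mb\Psi}(c\|a)-D_{\mb\Psi}(c\|b)$, so with $a=\wh f_{i,t+1}$, $b=u_{i,t}$, $c=f_{i,t}$, combined with the projection inequality, this yields
\bes
D_{\mb\Psi}(f_{i,t}\|\wh f_{i,t+1})+D_{\mb\Psi}(\wh f_{i,t+1}\|u_{i,t})\leq D_{\mb\Psi}(f_{i,t}\|u_{i,t}).
\ees
In particular $D_{\mb\Psi}(f_{i,t}\|\wh f_{i,t+1})\leq D_{\mb\Psi}(f_{i,t}\|u_{i,t})-D_{\mb\Psi}(\wh f_{i,t+1}\|u_{i,t})$. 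Now by the definition of the Bregman divergence, the right-hand side equals $\mb\Psi(\wh f_{i,t+1})-\mb\Psi(f_{i,t})-\langle \wh f_{i,t+1}-f_{i,t},\partial_{u_{i,t}}\mb\Psi\rangle_{\huaB\times\huaB^*}$, and since $\partial_{u_{i,t}}\mb\Psi=h_{i,t}=g_{i,t}-\eta\,\partial_{f_{i,t}}\tb J_i$ with $g_{i,t}=\partial_{f_{i,t}}\mb\Psi$, one finds
\bes
D_{\mb\Psi}(f_{i,t}\|f_{i,t}\|\ldots)\ \longrightarrow\ D_{\mb\Psi}(\wh f_{i,t+1}\|f_{i,t})\leq \eta\,\big\langle \wh f_{i,t+1}-f_{i,t},\,\partial_{f_{i,t}}\tb J_i\big\rangle_{\huaB\times\huaB^*}.
\ees
(The $\mb\Psi$-terms reassemble precisely into $D_{\mb\Psi}(\wh f_{i,t+1}\|f_{i,t})$ on the left after adding and subtracting $\langle\wh f_{i,t+1}-f_{i,t},g_{i,t}\rangle$.) Then I apply $\sigma_{\mb\Psi}$-strong convexity on the left, $D_{\mb\Psi}(\wh f_{i,t+1}\|f_{i,t})\geq\frac{\sigma_{\mb\Psi}}{2}\|\wh f_{i,t+1}-f_{i,t}\|_\huaB^2$, and Cauchy–Schwarz / the duality pairing bound together with Assumption \ref{gradient_ass} on the right, $\langle\wh f_{i,t+1}-f_{i,t},\partial_{f_{i,t}}\tb J_i\rangle_{\huaB\times\huaB^*}\leq\|\wh f_{i,t+1}-f_{i,t}\|_\huaB\,\|\partial_{f_{i,t}}\tb J_i\|_{\huaB^*}\leq G\,\|\wh f_{i,t+1}-f_{i,t}\|_\huaB$. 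Combining gives $\frac{\sigma_{\mb\Psi}}{2}\|\wh f_{i,t+1}-f_{i,t}\|_\huaB^2\leq\eta G\,\|\wh f_{i,t+1}-f_{i,t}\|_\huaB$, and dividing by $\|\wh f_{i,t+1}-f_{i,t}\|_\huaB$ (the inequality is trivial if this is zero) delivers $\|\wh f_{i,t+1}-f_{i,t}\|_\huaB\leq\frac{2\eta G}{\sigma_{\mb\Psi}}$.

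The main obstacle I anticipate is getting the constant exactly $\frac{G}{\sigma_{\mb\Psi}}\eta$ rather than $\frac{2G}{\sigma_{\mb\Psi}}\eta$: the factor-of-two discrepancy suggests the authors use a sharper route, most likely the strong-convexity inequality applied \emph{symmetrically} to both $D_{\mb\Psi}(\wh f_{i,t+1}\|f_{i,t})$ and $D_{\mb\Psi}(f_{i,t}\|\wh f_{i,t+1})$ and added, which produces $\sigma_{\mb\Psi}\|\wh f_{i,t+1}-f_{i,t}\|_\huaB^2\leq\langle f_{i,t}-\wh f_{i,t+1},\partial_{f_{i,t}}\mb\Psi-\partial_{\wh f_{i,t+1}}\mb\Psi\rangle_{\huaB\times\huaB^*}$, after which the projection inequality lets one replace $\partial_{\wh f_{i,t+1}}\mb\Psi$ by $h_{i,t}=\partial_{f_{i,t}}\mb\Psi-\eta\,\partial_{f_{i,t}}\tb J_i$ and the $\partial\mb\Psi$ terms cancel, leaving $\sigma_{\mb\Psi}\|\wh f_{i,t+1}-f_{i,t}\|_\huaB^2\leq\eta\,\langle\wh f_{i,t+1}-f_{i,t},\partial_{f_{i,t}}\tb J_i\rangle_{\huaB\times\huaB^*}\leq\eta G\|\wh f_{i,t+1}-f_{i,t}\|_\huaB$. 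This is the form that yields the stated constant; the secondary technical point to be careful about is the legitimacy of the first-order optimality (variational inequality) characterization of the Bregman projection $\Pi_{\huaW,\mb\Psi}$ on a general reflexive Banach space, which should follow from the convexity and G\^ateaux differentiability of $D_{\mb\Psi}(f\|\cdot)$ assumed earlier (noting that differentiability of the divergence in its first slot plus strong convexity guarantees a unique minimizer, as already remarked in the excerpt).
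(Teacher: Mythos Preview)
Your second approach---using the monotonicity inequality $\langle \wh f_{i,t+1}-f_{i,t},\partial_{\wh f_{i,t+1}}\mb\Psi-\partial_{f_{i,t}}\mb\Psi\rangle_{\huaB\times\huaB^*}\geq\sigma_{\mb\Psi}\|\wh f_{i,t+1}-f_{i,t}\|_\huaB^2$ (which is what the paper invokes via Corollary~3.5.11 of \cite{Zalinescu2002}) together with the projection variational inequality to replace $\partial_{\wh f_{i,t+1}}\mb\Psi$ by $h_{i,t}$---is exactly the paper's argument and yields the stated constant. One small slip: after the cancellation you should obtain $\sigma_{\mb\Psi}\|\wh f_{i,t+1}-f_{i,t}\|_\huaB^2\leq\eta\,\langle f_{i,t}-\wh f_{i,t+1},\partial_{f_{i,t}}\tb J_i\rangle_{\huaB\times\huaB^*}$ (not $\langle\wh f_{i,t+1}-f_{i,t},\cdot\rangle$), but this is harmless once you pass to the absolute value and apply the duality bound.
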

\begin{proof}
Due to the fact that the mirror map $\mb \Psi$ is proper, strongly convex and G\^ateaux differentiable, Lemma \ref{bijective_lemma} indicates that the operator $\partial_{(\cdot)}\mb \Psi:\huaB\rightarrow\huaB^*$ is invertible. If we denote $\mb \Psi^*:\huaB^*\rightarrow\huaB$ the convex conjugate of $\mb \Psi$, we have $\partial_{(\cdot)}\mb \Psi^*=(\partial_{(\cdot)}\mb \Psi)^{-1}$ as a result of the fact that $\huaB$ is reflexive. With the structure of the projection map $\Pi_{\huaW,\mb \Psi}$ at hand, according to the first-order optimality, we have
\bes
\left\nn  g-\wh f_{i,t+1}, \partial_{\wh f_{i,t+1}}D_{\mb \Psi}(\cdot,\partial_{h_{i,t}}\mb \Psi^*) \right\mm_{\huaB\times\huaB^*}\geq0, \  \forall g\in\huaW.
\ees
 For any $f'\in\huaW$, $D_{\mb \Psi}(\cdot,f')$ is a convex functional. Accordingly, for any $f_1$ and $f_2$, we have
\bes
\partial_{f_1}D_{\mb \Psi}(\cdot,f_2)=\partial_{f_1}\mb \Psi-\partial_{f_2}\mb \Psi.
\ees
Therefore, it holds that
\bes
\left\nn g-\wh f_{i,t+1},\partial_{\wh f_{i,t+1}}\mb \Psi-\partial_{\partial_{h_{i,t}}\mb \Psi^*}\mb \Psi\right\mm_{\huaB\times\huaB^*}\geq0, \  \forall g\in\huaW.
\ees
After setting $g=f_{i,t}$, we have
\bes
\left\nn \wh f_{i,t+1}-f_{i,t},\partial_{\wh f_{i,t+1}}\mb \Psi-\partial_{\partial_{h_{i,t}}\mb \Psi^*}\mb \Psi\right\mm_{\huaB\times\huaB^*}\leq0.
\ees
From the update of the main algorithm, we know
\bes
\partial_{\partial_{h_{i,t}}\mb \Psi^*}\mb \Psi=\partial_{f_{i,t}}\mb \Psi-\eta\partial_{f_{i,t}}\mb J_i.
\ees
Substituting this relation into the above inequality, we have
\bes
\left\nn \wh f_{i,t+1}-f_{i,t},\partial_{\wh f_{i,t+1}}\mb \Psi-\partial_{f_{i,t}}\mb \Psi+\eta\partial_{f_{i,t}}\mb J_i\right\mm_{\huaB\times\huaB^*}\leq0.
\ees
This further indicates that
\bes
&&\left\nn f_{i,t}-\wh f_{i,t+1},\eta\partial_{f_{i,t}}\mb J_i\right\mm_{\huaB\times\huaB^*}\\
&&\geq\left\nn \wh f_{i,t+1}-f_{i,t},\partial_{\wh f_{i,t+1}}\mb \Psi-\partial_{f_{i,t}}\mb \Psi\right\mm_{\huaB\times\huaB^*}\\
&&\geq\sigma_{\mb \Psi}\left\|\wh f_{i,t+1}-f_{i,t}\right\|_\huaB^2,
\ees
where we have used the equivalence relation of Corollary 3.5.11 in \cite{Zalinescu2002}. As a result of the basic property of bounded linear functional $\eta\partial_{f_{i,t}}\mb J_i$  defined on Banach space $\huaB$, we have
\bes
\left|\left\nn f_{i,t}-\wh f_{i,t+1},\eta\partial_{f_{i,t}}\mb J_i\right\mm_{\huaB\times\huaB^*}\right|\leq \eta\left\|\partial_{f_{i,t}}\mb J_i\right\|_{\huaB^*}\left\|\wh f_{i,t+1}-f_{i,t}\right\|_\huaB.
\ees
Finally, we arrive at
$\|p_{i,t}\|_\huaB\leq\f{G}{\sigma_{\mb\Psi}}\eta$,
which completes the proof.
\end{proof}

Before stating the next result, we require the following lemma related to the behavior of time-varying network. For any $t\geq s\geq0$, we denote the transition matrix $Q(t,s)$ associated with the communication matrix $P_t$ by $Q(t,s)=P_tP_{t-1}\cdots P_s$. Then we have the following lemma (\cite{rnv2010}).
\begin{lem}\label{network_nedic_lem}
	Let Assumption \ref{graph_ass} hold, then for all $i,j\in \huaV$ and all $t,s$ satisfying $t\geq s\geq 0$, it holds that $$\left|[Q(t,s)]_{ij}-\f{1}{m}\right|\leq \ooo\gamma^{t-s},$$
	in which $\ooo=(1-\f{\zeta}{4m^2})^{-2}$ and $\gamma=(1-\f{\zeta}{4m^2})^{\f{1}{B}}$.
\end{lem}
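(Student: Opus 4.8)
The statement is the classical mixing estimate for backward products of doubly stochastic matrices associated with a $B$-strongly-connected time-varying graph (cf. \cite{rnv2010}); note that the intended hypothesis is Assumption \ref{graph_ass}. The plan is to reduce the bound to a contraction estimate for the \emph{spread} of a single column of $Q(t,s)$ over a window of $B$ consecutive time steps, to iterate this contraction over the complete $B$-blocks contained in the interval $[s,t]$, and finally to absorb the leftover partial block together with the loss coming from the floor function into the prefactor $\omega$.

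First I would record the structural facts. Since each $P_t$ is doubly stochastic, so is every product $Q(t,s)=P_tP_{t-1}\cdots P_s$; hence each column of $Q(t,s)$ has nonnegative entries summing to $1$, and in particular its $m$ entries average to $1/m$. Writing $\Delta_j(t,s)=\max_i[Q(t,s)]_{ij}-\min_i[Q(t,s)]_{ij}$ for the spread of the $j$-th column, it follows that $\big|[Q(t,s)]_{ij}-\tfrac1m\big|\le\Delta_j(t,s)$ for every $i\in\huaV$. It therefore suffices to prove $\Delta_j(t,s)\le\omega\,\gamma^{t-s}$ uniformly in $j$.

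Next I would establish the per-block contraction, which is the core of the argument. Fix $j$, set $x(s-1)=e_j$ and $x(\tau)=P_\tau x(\tau-1)$ for $\tau\ge s$, so that $x(t)=Q(t,s)e_j$ is precisely the $j$-th column of $Q(t,s)$. Because every $P_\tau$ is row-stochastic, $\tau\mapsto\max_i x_i(\tau)$ is non-increasing and $\tau\mapsto\min_i x_i(\tau)$ is non-decreasing, so the spread $\tau\mapsto\max_i x_i(\tau)-\min_i x_i(\tau)$ is non-increasing, and the initial vector $e_j$ has spread $1$. The key estimate is that over any block $\{kB+1,\dots,(k+1)B\}$ the spread contracts by a definite factor:
\[
\max_i x_i\big((k+1)B\big)-\min_i x_i\big((k+1)B\big)\le\Big(1-\tfrac{\zeta}{4m^2}\Big)\Big(\max_i x_i(kB)-\min_i x_i(kB)\Big).
\]
To see this I would use that $[P_\tau]_{ii}\ge\zeta$, so at each step an agent retains at least a $\zeta$-fraction of its current value; hence along a directed path that appears, in temporal order within the block, in $\huaE_{kB+1}\cup\cdots\cup\huaE_{(k+1)B}$ — such a path exists between any ordered pair of agents by strong connectedness of the union graph — a quantifiable fraction of weight is transported, and a counting argument over the at most $m$ agents and $B$ time slots yields the contraction factor $1-\tfrac{\zeta}{4m^2}$. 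I expect this block-contraction estimate to be the main obstacle, since it requires careful bookkeeping of how weight propagates through the union graph while controlling the accumulated multiplicative constant and handling the worst case over which time slots the edges become active.

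Finally I would assemble the estimate. The interval $[s,t]$ contains at least $\lfloor(t-s)/B\rfloor-1$ complete $B$-blocks; iterating the block contraction over them, using monotonicity of the spread on the remaining (at most two) partial blocks and the initial spread $1$, gives
\[
\Delta_j(t,s)\le\Big(1-\tfrac{\zeta}{4m^2}\Big)^{\lfloor(t-s)/B\rfloor-1}\le\Big(1-\tfrac{\zeta}{4m^2}\Big)^{-2}\Big(1-\tfrac{\zeta}{4m^2}\Big)^{(t-s)/B}=\omega\,\gamma^{t-s},
\]
with $\omega=(1-\tfrac{\zeta}{4m^2})^{-2}$ and $\gamma=(1-\tfrac{\zeta}{4m^2})^{1/B}$, the two lost exponents accounting respectively for the rounding in the floor and for the misalignment between $s$ and the block boundaries $kB+1$. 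Combined with the reduction of the second paragraph, this proves the claimed bound.
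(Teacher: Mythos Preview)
Your sketch is correct and follows the standard route of \cite{rnv2010}: reduce to a column-spread bound, establish a per-$B$-block contraction of that spread via the strong connectivity of the union graph together with the uniform lower bound $\zeta$ on positive entries, and then iterate over the blocks contained in $[s,t]$, absorbing the floor and block-misalignment losses into the prefactor $\omega=(1-\zeta/(4m^2))^{-2}$. Note that the paper does not give its own proof of this lemma at all; it simply quotes the result from \cite{rnv2010}, so there is no ``paper's approach'' to compare against beyond the cited reference, which is exactly what you have outlined. Your observation that the hypothesis label should read Assumption~\ref{graph_ass} rather than Assumption~\ref{network_nedic_lem} is also correct; this is a typo in the paper.
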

Now we are ready to state the following basic result.
\begin{lem}\label{consensus_lemma}
	Under Assumptions \ref{graph_ass} and \ref{gradient_ass}, it holds that
	\bes
	\left\|f_{i,t}-\bar f_{t}\right\|_\huaB\leq\ooo\gamma^{t-1}\sum_{i=1}^m\|f_{i,1}\|_\huaB+\f{m\ooo G}{\sigma_{\mb \Psi}}\sum_{s=1}^{t-1}\gamma^{t-s}\eta.
	\ees
\end{lem}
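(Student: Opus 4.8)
The plan is to unroll the consensus dynamics of DFMD using the transition matrices $Q(t,s)$ and then exploit the geometric mixing estimate of Lemma \ref{network_nedic_lem}. First I would write the update for $f_{i,t}$ in closed form in terms of the initial data and the increments. Introduce the increment vector $e_{i,s} = \wh f_{i,s+1}-f_{i,s}$; then the fourth line of \eqref{DFMD_algorithm} gives $f_{i,t+1}=\sum_{j=1}^m[P_t]_{ij}\wh f_{j,t+1}=\sum_{j=1}^m[P_t]_{ij}(f_{j,t}+e_{j,t})$. Iterating this recursion down to the initialization yields
\bes
f_{i,t}=\sum_{j=1}^m[Q(t-1,1)]_{ij}f_{j,1}+\sum_{s=2}^{t-1}\sum_{j=1}^m[Q(t-1,s)]_{ij}e_{j,s-1}+e_{i,t-1},
\ees
and averaging over $i$ (using double stochasticity of each $P_t$, hence of $Q$) gives the analogous expression for $\bar f_t=\frac1m\sum_{i=1}^m f_{i,t}$ with $[Q(t-1,s)]_{ij}$ replaced by $\frac1m$.

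Next I would subtract the two expressions and take the Banach norm. Each term contributes a factor of the form $\bigl|[Q(t-1,s)]_{ij}-\tfrac1m\bigr|$ multiplying either $\|f_{j,1}\|_\huaB$ or $\|e_{j,s-1}\|_\huaB$; the self-increment term $e_{i,t-1}$ and the averaged self-increment are handled directly by the triangle inequality. Then apply Lemma \ref{network_nedic_lem} to bound $\bigl|[Q(t-1,s)]_{ij}-\tfrac1m\bigr|\le\ooo\gamma^{(t-1)-s}$, and apply the previous Proposition to bound $\|e_{j,s}\|_\huaB=\|\wh f_{j,s+1}-f_{j,s}\|_\huaB\le\frac{G}{\sigma_{\mb\Psi}}\eta$ uniformly. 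Summing over $j\in\huaV$ produces the factor $m$; summing the initialization terms over $j$ gives $\ooo\gamma^{t-1}\sum_{j=1}^m\|f_{j,1}\|_\huaB$, and summing the increment terms over $s$ gives $\frac{m\ooo G}{\sigma_{\mb\Psi}}\sum_{s=1}^{t-1}\gamma^{t-s}\eta$ after reindexing; one should check that the directly-bounded self-increment term is absorbed into this last sum (it corresponds to the $s=t-1$ contribution, possibly after a harmless constant adjustment), which is why the stated bound has no separate leftover term.

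The main obstacle, and the point deserving care, is purely bookkeeping: getting the indices on $Q(t-1,s)$ and the summation ranges exactly right so that the telescoped recursion matches the claimed bound, and making sure the "last increment" term $e_{i,t-1}$ (which has coefficient $1$, not a small mixing coefficient) is legitimately dominated by the $\gamma^{t-s}$-sum — here one uses that $\ooo=(1-\tfrac{\zeta}{4m^2})^{-2}>1$ and $m\ge1$, so the $s=t-1$ term of $\frac{m\ooo G}{\sigma_{\mb\Psi}}\sum_{s=1}^{t-1}\gamma^{t-s}\eta$ already exceeds $\frac{G}{\sigma_{\mb\Psi}}\eta\ge\|e_{i,t-1}\|_\huaB$. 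Everything else is a routine application of the triangle inequality in $\huaB$ together with the two cited lemmas; no properties of the mirror map beyond the already-established Proposition are needed.
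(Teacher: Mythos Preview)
Your approach is exactly the paper's: unroll the recursion via the transition matrices, average, subtract, then apply Lemma~\ref{network_nedic_lem} together with the Proposition bounding $\|\wh f_{j,s+1}-f_{j,s}\|_\huaB\le \tfrac{G}{\sigma_{\mb\Psi}}\eta$.

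One concrete bookkeeping slip worth flagging (you anticipated it): your unrolled formula is off by one $P$-factor. Since $f_{i,t+1}=\sum_j[P_t]_{ij}(f_{j,t}+e_{j,t})$, the increment $e_{\cdot,t}$ is multiplied by $P_t$ \emph{before} entering $f_{\cdot,t+1}$. The correct closed form is
\bes
f_{i,t}=\sum_{j=1}^m[Q(t-1,1)]_{ij}f_{j,1}+\sum_{s=1}^{t-1}\sum_{j=1}^m[Q(t-1,s)]_{ij}e_{j,s},
\ees
with the convention $Q(t-1,t-1)=P_{t-1}$; there is no naked $e_{i,t-1}$ term. (Your displayed formula corresponds instead to the communicate-then-compute scheme $f_{i,t+1}=\sum_j[P_t]_{ij}f_{j,t}+e_{i,t}$, which is not DFMD.) With the correct unrolling, every increment already carries a mixing coefficient, Lemma~\ref{network_nedic_lem} applies termwise, and your whole paragraph about absorbing the ``last increment'' via $\ooo>1$ becomes unnecessary.
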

\begin{proof}
	We set $p_{i,t}=\wh f_{i,t+1}-f_{i,t}$,
	according to the algorithm \eqref{DFMD_algorithm}, we have
	$f_{i,t+1}=\sum_{j=1}^m[P_t]_{ij}\wh f_{i,t+1}=\sum_{j=1}^m[P_t]_{ij}(f_{i,t}+p_{i,t})$,
	direct iteration implies 
	$f_{i,t}=\sum_{j=1}^m[Q(t-1,1)]_{ij}f_{j,1}+\sum_{s=1}^{t-1}\sum_{j=1}^m[Q(t-1,\ell)]_{ij}p_{j,s}$,
	where $Q(t,\ell)=P_tP_{t-1}\cdots P_\ell$, $t\geq\ell\geq1$ is the transition matrix of $P_t$. Taking the average on both sides of the above equation, we have $\bar f_{t}=\bar f_1+\sum_{s=1}^{t-1}\f{1}{m}\sum_{i=1}^mp_{i,s}$.
	Subtraction between the above two equations and taking Banach norm on both sides, we have
	\bes
	\beal
	\left\|f_{i,t}-\bar f_{t}\right\|_\huaB\leq& \ooo\gamma^{t-1}\sum_{i=1}^m\|f_{i,1}\|_\huaB+\sum_{s=1}^{t-1}\ooo\gamma^{t-s}\sum_{j=1}^m\|p_{j,s}\|_\huaB\\
	\leq&\ooo\gamma^{t-1}\sum_{i=1}^m\|f_{i,1}\|_\huaB+\f{m\ooo G}{\sigma_{\mb \Psi}}\sum_{s=1}^{t-1}\gamma^{t-s}\eta.
	\eeal
	\ees
	The proof is finished.
\end{proof}

In the following analysis, we denote 
\bea
&&\hspace{-1cm}V_t=\max_{{s\in[t]}}\left\{\xi_T,v_{{s}}\right\},  \label{v_def1}\\
&&\hspace{-1cm}v_t=\max_{i\in\huaV}\left\{\sqrt{D_{\mb \Psi}(f^*\|\wh f_{i,{t}})}\right\}, \label{v_def2}
\eea
with $\xi_T>0$ a constant that may depend on the total iteration $T$ and will be determined in the subsequent analysis. Here, it is easy to see $v_1=\max_{i\in\huaV}\{\sqrt{D_{\mb \Psi}(f^*\|\wh f_{i,1})}\}$.

\begin{pro}\label{zong_ine}
	There holds that, for any $K\in[T]$,
	\bes
	\huaT_{1,K}\leq\huaT_{2,K}+\huaT_{3,K}
	\ees
	where $\huaT_{1,K}=\sum_{t=1}^K\f{\eta}{V_t}\sum_{i=1}^m\left\nn f_{i,t}-f^*,\partial_{f_{i,t}}\mb J_i\right\mm_{\huaB\times\huaB^*}$, $\huaT_{2,K}=\sum_{t=1}^K\f{1}{V_t}\sum_{i=1}^m\Big[D_{\mb \Psi}(f^*\|f_{i,t})-D_{\mb \Psi}(f^*\|\wh f_{i,t+1})\Big]$, $\huaT_{3,K}=\sum_{t=1}^K\sum_{i=1}^m\f{\eta^2}{2\sigma_{\mb \Psi}V_t}\left\|\partial_{f_{i,t}}\mb J_i\right\|_{\huaB^*}^2$. 
\end{pro}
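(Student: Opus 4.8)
The plan is to derive a per-iteration inequality for the Bregman divergence $D_{\mb\Psi}(f^*\|\cdot)$ and then telescope. The natural starting point is the projection step in \eqref{DFMD_algorithm}, together with the three-point identity for the Bregman divergence. First I would use the first-order optimality condition for $\wh f_{i,t+1}=\Pi_{\huaW,\mb\Psi}((\partial\mb\Psi)^{-1}(h_{i,t}))$, which (as already exploited in the proof of the previous proposition) gives, for all $g\in\huaW$,
\[
\left\nn g-\wh f_{i,t+1},\partial_{\wh f_{i,t+1}}\mb\Psi-\partial_{f_{i,t}}\mb\Psi+\eta\partial_{f_{i,t}}\mb J_i\right\mm_{\huaB\times\huaB^*}\geq0.
\]
Choosing $g=f^*$ and rearranging, the term $\eta\nn f^*-\wh f_{i,t+1},\partial_{f_{i,t}}\mb J_i\mm$ is bounded by $\nn \wh f_{i,t+1}-f^*,\partial_{f_{i,t}}\mb\Psi-\partial_{\wh f_{i,t+1}}\mb\Psi\mm$, and the right side is exactly $D_{\mb\Psi}(f^*\|f_{i,t})-D_{\mb\Psi}(f^*\|\wh f_{i,t+1})-D_{\mb\Psi}(\wh f_{i,t+1}\|f_{i,t})$ by the three-point identity. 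This produces the "descent plus error" decomposition in which $\huaT_{2,K}$ will be the telescoping part.

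Next I would split $\nn f_{i,t}-f^*,\partial_{f_{i,t}}\mb J_i\mm=\nn f_{i,t}-\wh f_{i,t+1},\partial_{f_{i,t}}\mb J_i\mm+\nn \wh f_{i,t+1}-f^*,\partial_{f_{i,t}}\mb J_i\mm$. The second piece is controlled by the optimality inequality above; the first piece is bounded, via the duality $|\nn f_{i,t}-\wh f_{i,t+1},\partial_{f_{i,t}}\mb J_i\mm|\le\|f_{i,t}-\wh f_{i,t+1}\|_\huaB\|\partial_{f_{i,t}}\mb J_i\|_{\huaB^*}$ and Proposition~1 (which gives $\|\wh f_{i,t+1}-f_{i,t}\|_\huaB\le (G/\sigma_{\mb\Psi})\eta$), by a quantity of order $\eta^2$. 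Combining with $\sigma_{\mb\Psi}$-strong convexity of $\mb\Psi$, which yields $D_{\mb\Psi}(\wh f_{i,t+1}\|f_{i,t})\geq\frac{\sigma_{\mb\Psi}}{2}\|\wh f_{i,t+1}-f_{i,t}\|_\huaB^2$, one can absorb the cross term and be left, after a Young/Cauchy--Schwarz step, with a residual of the form $\frac{\eta^2}{2\sigma_{\mb\Psi}}\|\partial_{f_{i,t}}\mb J_i\|_{\huaB^*}^2$ — precisely the summand of $\huaT_{3,K}$. Dividing the whole per-agent, per-$t$ inequality by $V_t$ and summing over $i\in\huaV$ and $t=1,\dots,K$ gives $\huaT_{1,K}\le\huaT_{2,K}+\huaT_{3,K}$.

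The step I expect to be the main obstacle is the clean bookkeeping of the Bregman cross term: making sure that after writing $\nn f_{i,t}-f^*,\eta\partial_{f_{i,t}}\mb J_i\mm$ in terms of the three Bregman divergences plus the $\nn f_{i,t}-\wh f_{i,t+1},\eta\partial_{f_{i,t}}\mb J_i\mm$ remainder, the $-D_{\mb\Psi}(\wh f_{i,t+1}\|f_{i,t})$ term is exactly enough (via strong convexity) to dominate the remainder up to the $\eta^2$-order error; this is where the factor $1/(2\sigma_{\mb\Psi})$ and the square $\|\partial_{f_{i,t}}\mb J_i\|_{\huaB^*}^2$ must appear. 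The division by $V_t$ is harmless here since $V_t>0$ and does not interact with the per-step algebra — its role is only to make the later summation over $t$ convergent — so it can be carried through formally. One should also be slightly careful that the optimality condition is applied with the correct identification $(\partial\mb\Psi)^{-1}=\partial\mb\Psi^*$ (valid by reflexivity, as noted in the proof of Proposition~1) so that $\partial_{\partial_{h_{i,t}}\mb\Psi^*}\mb\Psi=\partial_{f_{i,t}}\mb\Psi-\eta\partial_{f_{i,t}}\mb J_i$.
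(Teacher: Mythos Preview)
Your proposal is correct and follows essentially the same route as the paper: first-order optimality at $g=f^*$, the three-point identity to extract $D_{\mb\Psi}(f^*\|f_{i,t})-D_{\mb\Psi}(f^*\|\wh f_{i,t+1})-D_{\mb\Psi}(\wh f_{i,t+1}\|f_{i,t})$, splitting $\nn f_{i,t}-f^*,\eta\partial_{f_{i,t}}\mb J_i\mm$ through $\wh f_{i,t+1}$, and a Young-type inequality against the strong-convexity term to leave exactly $\frac{\eta^2}{2\sigma_{\mb\Psi}}\|\partial_{f_{i,t}}\mb J_i\|_{\huaB^*}^2$, then divide by $V_t$ and sum. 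Two minor remarks: the invocation of Proposition~1 is unnecessary here (the Young step alone gives the sharp constant), and in your first rearrangement the term that gets bounded above by the three-point expression is $\eta\nn \wh f_{i,t+1}-f^*,\partial_{f_{i,t}}\mb J_i\mm$, not $\eta\nn f^*-\wh f_{i,t+1},\partial_{f_{i,t}}\mb J_i\mm$.
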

\begin{proof}
Applying the first-order optimality again, we know $\nn  g-\wh f_{i,t+1}, \partial_{\wh f_{i,t+1}}D_{\mb \Psi}(\cdot,\partial_{h_{i,t}}\mb \Psi^*) \mm_{\huaB\times\huaB^*}\geq0,  \forall g\in\huaW$. Setting $g=f^*$, we have
\bes
\left\nn \wh f_{i,t+1}-f^*,\partial_{\wh f_{i,t+1}}\mb \Psi-\partial_{\partial_{h_{i,t}}\mb \Psi^*}\mb \Psi\right\mm_{\huaB\times\huaB^*}\leq0, \  \forall g\in\huaW.
\ees
Using the relation $\partial_{\partial_{h_{i,t}}\mb \Psi^*}\mb \Psi=\partial_{f_{i,t}}\mb \Psi-\eta\partial_{f_{i,t}}\mb J_i$ again, we have
\bes
&&\left\nn\wh f_{i,t+1}-f^*,\partial_{f_{i,t}}\mb \Psi-\partial_{\wh f_{i,t+1}}\mb \Psi\right\mm_{\huaB\times\huaB^*}\\
&&\geq\left\nn\wh f_{i,t+1}-f^*,\eta\partial_{f_{i,t}}\mb J_i\right\mm_{\huaB\times\huaB^*}.
\ees
Using the three-point inequality, we obtain that the left hand side of the above inequality satisfies,
\bes
\beal
LHS=&D_{\mb \Psi}(f^*\|f_{i,t})-D_{\mb \Psi}(f^*\|\wh f_{i,t+1})-D_{\mb \Psi}(\wh f_{i,t+1}\|f_{i,t})\\
\leq&D_{\mb \Psi}(f^*\|f_{i,t})-D_{\mb \Psi}(f^*\|\wh f_{i,t+1})-\f{\sigma_{\mb \Psi}}{2}\left\|\wh f_{i,t+1}-f_{i,t}\right\|_\huaB^2.
\eeal
\ees
The right hand side satisfies,
\bes
\beal
RHS=&\left\nn\wh f_{i,t+1}-f_{i,t},\eta\partial_{f_{i,t}}\mb J_i\right\mm_{\huaB\times\huaB^*}\\
&+\eta\left\nn f_{i,t}-f^*,\partial_{f_{i,t}}\mb J_i\right\mm_{\huaB\times\huaB^*}\\
\geq&-\sqrt{\sigma_{\mb \Psi}}\|\wh f_{i,t+1}-f_{i,t}\|_\huaB\cdot\f{\eta}{\sqrt{\sigma_{\mb \Psi}}}\left\|\partial_{f_{i,t}}\mb J_i\right\|_{\huaB^*}\\
&+\eta\left\nn f_{i,t}-f^*,\partial_{f_{i,t}}\mb J_i\right\mm_{\huaB\times\huaB^*}\\
\geq&-\f{\eta^2}{2\sigma_{\mb \Psi}}\left\|\partial_{f_{i,t}}\mb J_i\right\|_{\huaB^*}^2-\f{\sigma_{\mb \Psi}}{2}\|\wh f_{i,t+1}-f_{i,t}\|_\huaB^2\\
&+\eta\left\nn f_{i,t}-f^*,\partial_{f_{i,t}}\mb J_i\right\mm_{\huaB\times\huaB^*}.
\eeal
\ees
Combining the above estimates for LHS and RHS, we arrive at
\bes
\beal
\eta\left\nn f_{i,t}-f^*,\partial_{f_{i,t}}\mb J_i\right\mm_{\huaB\times\huaB^*}\leq& D_{\mb \Psi}(f^*\|f_{i,t})-D_{\mb \Psi}(f^*\|\wh f_{i,t+1})\\
&+\f{\eta^2}{2\sigma_{\mb \Psi}}\left\|\partial_{f_{i,t}}\mb J_i\right\|_{\huaB^*}^2.
\eeal
\ees
Hence, after dividing both sides by $V_t$ and taking summation from $i=1$ to $i=m$ and $t=1$ to $t=T$, we obtain the desired result.
\end{proof}

\begin{pro}\label{huaT_1_est}
Under Assumption \ref{graph_ass} and \ref{gradient_ass},	for any $K\in[T]$, there holds
\bes
 \huaT_{1,K}\geq\sum_{t=1}^K\f{\eta}{V_t}\Big[\mb J(f_{\ell,t})-\mb J(f^*)\Big]-\Theta_1\eta-\Theta_2T\eta^2, 
\ees
with $\Theta_1=\f{2\ooo L}{(1-\gamma)v_1}\sum_{i=1}^m\|f_{i,1}\|_\huaB$, $\Theta_2=\f{2\ooo m L^2}{\sigma_{\mb \Psi}(1-\gamma)v_1}$.
\end{pro}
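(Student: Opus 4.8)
The plan is to bound $\huaT_{1,K}$ from below term by term in $t$, splitting each summand into a ``centralized'' optimality gap $\mb J(f_{\ell,t})-\mb J(f^*)$ plus a network-disagreement remainder that the consensus estimate of Lemma~\ref{consensus_lemma} controls. First, for each $i$ I would invoke the subgradient inequality of the convex functional $\mb J_i$ at the point $f_{i,t}$, namely $\nn f_{i,t}-f^*,\partial_{f_{i,t}}\mb J_i\mm_{\huaB\times\huaB^*}\ge \mb J_i(f_{i,t})-\mb J_i(f^*)$, and sum over $i\in\huaV$; using $\mb J=\sum_{i=1}^m\mb J_i$ this yields $\sum_{i=1}^m\nn f_{i,t}-f^*,\partial_{f_{i,t}}\mb J_i\mm_{\huaB\times\huaB^*}\ge \sum_{i=1}^m\mb J_i(f_{i,t})-\mb J(f^*)$. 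The right-hand side is not yet $\mb J(f_{\ell,t})-\mb J(f^*)$ because each $\mb J_i$ is evaluated at its own iterate; I would close this gap with Assumption~\ref{gradient_ass}, which makes each $\mb J_i$ $G$-Lipschitz on $\huaW$ (this is the constant written $L$ in the statement), so $\mb J_i(f_{i,t})\ge \mb J_i(f_{\ell,t})-G\|f_{i,t}-f_{\ell,t}\|_\huaB$ and hence $\sum_{i=1}^m\mb J_i(f_{i,t})\ge \mb J(f_{\ell,t})-G\sum_{i=1}^m\|f_{i,t}-f_{\ell,t}\|_\huaB$.

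Next I would route the disagreement through the network average $\bar f_t$ via $\|f_{i,t}-f_{\ell,t}\|_\huaB\le \|f_{i,t}-\bar f_t\|_\huaB+\|f_{\ell,t}-\bar f_t\|_\huaB$, and apply Lemma~\ref{consensus_lemma} (which in turn rests on Assumption~\ref{graph_ass}) to bound each consensus error by $\ooo\gamma^{t-1}\sum_{j=1}^m\|f_{j,1}\|_\huaB+\f{m\ooo G}{\sigma_{\mb\Psi}}\sum_{s=1}^{t-1}\gamma^{t-s}\eta$. Multiplying the resulting per-$t$ inequality by $\eta/V_t$ and summing $t=1,\dots,K$ gives $\huaT_{1,K}\ge \sum_{t=1}^K\f{\eta}{V_t}\big[\mb J(f_{\ell,t})-\mb J(f^*)\big]-G\sum_{t=1}^K\f{\eta}{V_t}\sum_{i=1}^m\|f_{i,t}-f_{\ell,t}\|_\huaB$; in the remainder I would use the monotonicity $V_t\ge v_t\ge v_1$ (valid since $v_t$ is a maximum over a set growing with $t$, cf. \eqref{v_def2}) to replace $1/V_t$ by the uniform bound $1/v_1$, substitute the consensus estimate, and sum two geometric series: $\sum_{t=1}^K\gamma^{t-1}\le \tfrac1{1-\gamma}$ produces the contribution linear in $\eta$ with coefficient $\Theta_1$, while $\sum_{t=1}^K\sum_{s=1}^{t-1}\gamma^{t-s}\le \tfrac{T}{1-\gamma}$ produces the contribution proportional to $T\eta^2$ with coefficient $\Theta_2$; collecting the constants $G$, $\ooo$, $\sigma_{\mb\Psi}$, $v_1$ and the agent count then gives exactly the stated $\Theta_1$ and $\Theta_2$.

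The step I expect to be the main obstacle is controlling the gap between $\sum_i\mb J_i(f_{i,t})$ and $\mb J(f_{\ell,t})$ \emph{without} any boundedness assumption on $\huaW$. The tempting shortcut of writing $f_{i,t}-f^*=(f_{i,t}-f_{\ell,t})+(f_{\ell,t}-f^*)$ and bounding the cross term by the duality inequality would introduce a factor $\|f_{\ell,t}-f^*\|_\huaB$, which need not be finite on an unbounded decision set — precisely the difficulty this paper advertises avoiding. Passing instead through the Lipschitz continuity of each local $\mb J_i$ keeps only the consensus distances $\|f_{i,t}-\bar f_t\|_\huaB$ in the error, and those are bounded by Lemma~\ref{consensus_lemma} independently of the (unknown, possibly far-away) optimum $f^*$; once that routing is in place, the bookkeeping of the geometric sums and the $1/v_1$ normalization is routine.
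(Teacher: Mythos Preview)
Your proposal is correct and follows essentially the same route as the paper: apply the subgradient inequality for each $\mb J_i$, split $\mb J_i(f_{i,t})-\mb J_i(f^*)$ through $\mb J_i(f_{\ell,t})$, control the resulting disagreement term via the Lipschitz bound and the triangle inequality $\|f_{i,t}-f_{\ell,t}\|_\huaB\le\|f_{i,t}-\bar f_t\|_\huaB+\|f_{\ell,t}-\bar f_t\|_\huaB$, invoke Lemma~\ref{consensus_lemma}, replace $1/V_t$ by $1/v_1$, and sum the two geometric series. Your observation that the constant written $L$ in $\Theta_1,\Theta_2$ is in fact the Lipschitz constant $G$ of Assumption~\ref{gradient_ass} is also correct---this is a notational slip in the paper, not in your argument.
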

\begin{proof}
Since $\mb J_i$ is a convex functional, there holds, for any $\ell\in\huaV$, $\left\nn f_{i,t}-f^*,\partial_{f_{i,t}}\mb J_i\right\mm_{\huaB\times\huaB^*}\geq\mb J_i(f_{i,t})-\mb J_i(f^*)=\mb J_i(f_{i,t})-\mb J_i(f_{\ell,t})+\mb J_i(f_{\ell,t})-\mb J_i(f^*)$.
After taking summation on both sides and using the fact $\mb J=\sum_{i=1}^m\mb J_i$, it holds that, for any $\ell\in\huaV$ and $K\in[T]$, $\huaT_{1,K}\geq\huaR_{1,K}+\huaR_{2,K}$, where
\bes
\beal
\huaR_{1,K}=&\sum_{t=1}^K\f{\eta}{V_t}\sum_{i=1}^m\Big[\mb J_i(f_{i,t})-\mb J_i(f_{\ell,t})\Big], \\
\huaR_{2,K}=&\sum_{t=1}^K\f{\eta}{V_t}\Big[\mb J(f_{\ell,t})-\mb J(f^*)\Big].
\eeal
\ees
Here, due to the fact that $\|\partial_{f_{\ell,t}}\mb J_i\|_{\huaB^*}\leq G$, $\huaR_{1,K}$ satisfies
\bes
\huaR_{1,K}\geq-G\sum_{t=1}^K\f{\eta}{V_t}\sum_{i=1}^m\left\|f_{i,t}-f_{\ell,t}\right\|_\huaB.
\ees
According to Lemma \ref{consensus_lemma} and Minkowski inequality for Banach space, we know, for any $K\in[T]$,
\bes
\beal
\huaR_{1,K}\geq&-G\sum_{t=1}^K\f{\eta}{v_1}\left(2\ooo\gamma^{t-1}\sum_{i=1}^m\|f_{i,1}\|_\huaB+\f{2m\ooo L}{\sigma_{\mb \Psi}}\sum_{s=1}^{t-1}\gamma^{t-s}\eta\right)\\
\geq&-\left[\f{2\ooo L}{(1-\gamma)v_1}\sum_{i=1}^m\|f_{i,1}\|_\huaB\right]\eta-\left[\f{2\ooo m L^2}{\sigma_{\mb \Psi}(1-\gamma)v_1}\right]T\eta^2.
\eeal
\ees
Combining this estimate with the estimate for $\huaR_{2,K}$, we obtain the desired result.
\end{proof}
The estimate for $\huaT_{2,K}$ is given in the following proposition.

\begin{pro} \label{huaT_2_est}
	Under Assumptions \ref{graph_ass} and \ref{Bregman_convexity_ass},	for any $K\in[T]$, it holds that
	\bes
	&&\huaT_{2,K}\leq\f{1}{v_1}\sum_{j=1}^mD_{\mb \Psi}(f^*\|\wh f_{j,1})-\f{1}{V_{K}}\sum_{j=1}^mD_{\mb \Psi}(f^*\|\wh f_{j,K+1}).
	\ees
\end{pro}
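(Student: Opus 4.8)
The plan is to leverage two structural facts — that $P_t$ is doubly stochastic (Assumption~\ref{graph_ass}) and that $D_{\mb \Psi}(f^*\|\cdot)$ is separately convex in its second slot (Assumption~\ref{Bregman_convexity_ass}) — to collapse the ``mixed'' sum defining $\huaT_{2,K}$ into a single telescoping sequence in the hatted iterates, and then to absorb the time-varying normalizing weights $1/V_t$ by an Abel summation. Throughout I write $A_t:=\sum_{j=1}^m D_{\mb \Psi}(f^*\|\wh f_{j,t})$, so that the subtracted part of $\huaT_{2,K}$ is exactly $\sum_{t=1}^K \f{1}{V_t}A_{t+1}$, and I recall the initialization convention $\wh f_{i,1}=f_{i,1}$.

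First I would fix $t\geq 2$ and use the consensus step $f_{i,t}=\sum_{j=1}^m[P_{t-1}]_{ij}\wh f_{j,t}$ from \eqref{DFMD_algorithm}. Applying Assumption~\ref{Bregman_convexity_ass} with the first argument $f^*$ and weights $a_j=[P_{t-1}]_{ij}$ gives $D_{\mb \Psi}(f^*\|f_{i,t})\leq\sum_{j=1}^m[P_{t-1}]_{ij}D_{\mb \Psi}(f^*\|\wh f_{j,t})$; summing over $i\in\huaV$ and using column-stochasticity $\sum_{i=1}^m[P_{t-1}]_{ij}=1$ yields $\sum_{i=1}^m D_{\mb \Psi}(f^*\|f_{i,t})\leq A_t$. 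For $t=1$ this same inequality holds with equality by the convention $\wh f_{i,1}=f_{i,1}$, so it is valid for all $t\in[K]$. Substituting into the definition of $\huaT_{2,K}$ then gives $\huaT_{2,K}\leq\sum_{t=1}^K\f{1}{V_t}\bigl(A_t-A_{t+1}\bigr)$.

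Next I would perform summation by parts on $\sum_{t=1}^K\f{1}{V_t}(A_t-A_{t+1})$. Since $v_t$ in \eqref{v_def2} is a maximum over a set growing with $t$, it is nondecreasing, hence $V_t=\max\{\xi_T,v_t\}$ is nondecreasing and $1/V_t$ is nonincreasing; moreover $A_t\geq 0$ because $\sigma_{\mb \Psi}$-strong convexity of $\mb \Psi$ forces $D_{\mb \Psi}(\cdot\|\cdot)\geq 0$. Rearranging, $\sum_{t=1}^K\f{1}{V_t}(A_t-A_{t+1})=\f{1}{V_1}A_1-\f{1}{V_K}A_{K+1}+\sum_{t=2}^{K}\bigl(\f{1}{V_t}-\f{1}{V_{t-1}}\bigr)A_t$, and every term of the last sum is $\leq 0$; finally $V_1=\max\{\xi_T,v_1\}\geq v_1$ gives $\f{1}{V_1}A_1\leq\f{1}{v_1}A_1$, which is precisely $\f{1}{v_1}\sum_{j=1}^m D_{\mb \Psi}(f^*\|\wh f_{j,1})-\f{1}{V_K}\sum_{j=1}^m D_{\mb \Psi}(f^*\|\wh f_{j,K+1})$, the claimed bound. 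The argument is essentially bookkeeping; the only step requiring care is the Abel summation, where the monotonicity of $1/V_t$ and the nonnegativity of $A_t$ must be used with the correct signs, and the boundary term at $t=1$, which forces one to invoke the relation $\wh f_{i,1}=f_{i,1}$ explicitly rather than through Assumption~\ref{Bregman_convexity_ass}. I expect no genuine obstacle beyond that.
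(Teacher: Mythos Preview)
Your argument is correct and follows essentially the same route as the paper's own proof: apply separate convexity and double stochasticity to replace $\sum_i D_{\mb\Psi}(f^*\|f_{i,t})$ by $A_t$, then perform an Abel summation on $\sum_{t=1}^K\f{1}{V_t}(A_t-A_{t+1})$ and drop the nonpositive cross terms using the monotonicity of $1/V_t$ and the bound $V_1\geq v_1$. In fact your version is slightly cleaner: you use the correct index $P_{t-1}$ in the consensus step and handle the $t=1$ boundary case explicitly via the convention $\wh f_{i,1}=f_{i,1}$, whereas the paper glosses over both points.
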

\begin{proof}
	According to the algorithm structure and double stochasticity of the communication matrix $P_t$, we have
	\bes
	&&\hspace{-0.5cm}\huaT_{2,K}=\sum_{t=1}^K\f{1}{V_t}\sum_{i=1}^m\Big[D_{\mb \Psi}(f^*\|\sum_{j=1}^m[P_t]_{ij}\wh f_{j,t})-D_{\mb \Psi}(f^*\|\wh f_{i,t+1})\Big]\\
	&&\hspace{-0.5cm}\leq\sum_{t=1}^K\f{1}{V_t}\sum_{i=1}^m\Big[\sum_{j=1}^m[P_t]_{ij}D_{\mb \Psi}(f^*\|\wh f_{j,t})-D_{\mb \Psi}(f^*\|\wh f_{i,t+1})\Big]\\
	&&\hspace{-0.5cm}=\sum_{t=1}^K\f{1}{V_t}\Big[\sum_{j=1}^mD_{\mb \Psi}(f^*\|\wh f_{j,t})-\sum_{i=1}^mD_{\mb \Psi}(f^*\|\wh f_{i,t})\Big].
	\ees
	The above inequality can be further bounded by
	\bes
	&&\f{1}{V_1}\sum_{j=1}^mD_{\mb \Psi}(f^*\|\wh f_{j,1})-\f{1}{V_K}\sum_{i=1}^mD_{\mb \Psi}(f^*\|\wh f_{i,K+1})\\
	&&+\sum_{t=2}^K\left(\f{1}{V_{t}}-\f{1}{V_{t-1}}\right)\sum_{i=1}^m D_{\mb \Psi}(f^*\|\wh f_{i,{t}}).
	\ees
	We know that, for  non-decreasing sequence $\{V_t\}$, it holds that $\f{1}{V_{t}}-\f{1}{V_{t-1}}\leq0$, $t\geq2$. Hence, after simplification, we have 
	\bes
	\huaT_{2,K}\leq\f{1}{V_1}\sum_{j=1}^mD_{\mb \Psi}(f^*\|\wh f_{j,1})-\f{1}{V_K}\sum_{i=1}^mD_{\mb \Psi}(f^*\|\wh f_{i,K+1}),
	\ees
	which completes the proof after noting that $v_1\leq V_1$.
\end{proof}

The estimate for $\huaT_{3,K}$ is given in the following proposition.
\begin{pro}\label{huaT_3_est}
Under Assumption \ref{gradient_ass},	for any $K\in[T]$ and any positive  $\xi_T>0$, there holds
	\bes
	\huaT_{3,K}\leq\xi_T\vee\left(\f{1}{\xi_T}\f{mG^2}{2\sigma_{\mb \Psi}}\eta^2T\right).
	\ees
\end{pro}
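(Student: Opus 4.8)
The plan is to bound $\huaT_{3,K}$ using the $G$-Lipschitz assumption on each $\mb J_i$ together with the case analysis implicit in the definition of $V_t = \max\{\xi_T, v_t\}$. First I would invoke Assumption \ref{gradient_ass} to replace each $\left\|\partial_{f_{i,t}}\mb J_i\right\|_{\huaB^*}^2$ by $G^2$, which immediately gives
\bes
\huaT_{3,K}=\sum_{t=1}^K\sum_{i=1}^m\f{\eta^2}{2\sigma_{\mb \Psi}V_t}\left\|\partial_{f_{i,t}}\mb J_i\right\|_{\huaB^*}^2\leq\f{mG^2\eta^2}{2\sigma_{\mb \Psi}}\sum_{t=1}^K\f{1}{V_t}.
\ees
Since $K\leq T$ and $V_t\geq\xi_T$ for every $t$ by \eqref{v_def1}, the sum $\sum_{t=1}^K 1/V_t$ is at most $T/\xi_T$, so $\huaT_{3,K}\leq\f{1}{\xi_T}\cdot\f{mG^2}{2\sigma_{\mb \Psi}}\eta^2 T$. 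This already proves the bound, since the right-hand side of the claim is a maximum ($\vee$) of $\xi_T$ and that quantity; the term $\xi_T$ is a harmless overestimate included so that the bound can later be optimized jointly with the other $\huaT$-terms (which involve $\xi_T$ in the denominator through $V_t$ appearing in $\huaT_{1,K}$ and $\huaT_{2,K}$).

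Alternatively, if one wants the $\xi_T$ branch to be genuinely used rather than vacuous, I would split the time indices according to whether $V_t=v_t$ dominates or $V_t=\xi_T$ dominates; on the former block one bounds $1/V_t\leq 1/v_t$ and uses a telescoping/absorption argument tying $v_t$ to the Bregman terms, while on the latter block one uses $1/V_t\leq 1/\xi_T$ as above. But given the statement only asserts the max, the clean route is the direct estimate above, and the $\vee\xi_T$ is simply retained for bookkeeping in the subsequent synthesis of Propositions \ref{zong_ine}--\ref{huaT_3_est}.

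The main obstacle here is essentially cosmetic rather than mathematical: one must be careful that the definition $V_t=\max\{\xi_T,v_t\}$ genuinely guarantees $V_t\geq\xi_T$ for all $t\in[K]$ (it does, directly from the max), so no case work about whether $v_t$ has "kicked in" is actually needed for this particular proposition. The only subtlety worth a sentence of care is that $\xi_T$ is allowed to depend on $T$ but not on $t$, so it can legitimately be pulled out of the sum over $t$; once that is noted, the argument is a one-line computation. I expect the author's proof to be exactly this: apply Assumption \ref{gradient_ass}, bound $1/V_t\leq 1/\xi_T$, sum $K\leq T$ terms, and write the result as the stated maximum.
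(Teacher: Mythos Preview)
Your proposal is correct and matches the paper's proof essentially line for line: apply Assumption~\ref{gradient_ass} to replace $\|\partial_{f_{i,t}}\mb J_i\|_{\huaB^*}^2$ by $G^2$, use $V_t\geq\xi_T$ from \eqref{v_def1} together with $K\leq T$, and record the result as the stated maximum. Your remark that the $\xi_T$ branch of the $\vee$ is vacuous here but retained for later optimization is also accurate.
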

\begin{proof}
	According to the representation of $\huaT_{3,K}$, noting the definition of $V_t$, and $K\leq T$,  we have
	\bes
	\huaT_{3,K}\leq\sum_{t=1}^K\sum_{i=1}^m\f{\eta^2}{2\sigma_{\mb \Psi}V_t}G^2\leq\xi_T\vee\left(\f{1}{\xi_T}\f{mG^2}{2\sigma_{\mb \Psi}}\eta^2T\right),
	\ees
	which completes the proof.
	\end{proof}

For any $\ell\in\huaV$, define the following sequence $\ww f_{\ell,T}=\f{1}{T}\sum_{t=1}^Tf_{\ell,t}$.
The next theorem gives our first main result on the convergence performance of DFMD.
\begin{thm}
	Under Assumptions \ref{graph_ass}-\ref{Bregman_convexity_ass}, if the stepsize $\eta=\f{1}{\sqrt{T}}$, we have
	\bes
	\mb J(\ww f_{\ell,T})-\mb J(f^*)\leq\huaO\left(\f{1}{\sqrt{T}}\right).
	\ees
\end{thm}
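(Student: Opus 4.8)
The plan is to assemble the four preceding propositions into a single master inequality and then run a self-bounding argument that keeps the normalizing quantity $V_T$ under control. First I would combine Propositions~\ref{zong_ine}, \ref{huaT_1_est}, \ref{huaT_2_est} and \ref{huaT_3_est}: the inequality $\huaT_{1,K}\le\huaT_{2,K}+\huaT_{3,K}$, together with the lower bound for $\huaT_{1,K}$ and the upper bounds for $\huaT_{2,K},\huaT_{3,K}$, yields for every $K\in[T]$
\[
\sum_{t=1}^{K}\f{\eta}{V_t}\big[\mb J(f_{\ell,t})-\mb J(f^*)\big]+\f{1}{V_K}\sum_{j=1}^{m}D_{\mb\Psi}(f^*\|\wh f_{j,K+1})\le\Xi_T,
\]
where $\Xi_T:=\f{1}{v_1}\sum_{j=1}^{m}D_{\mb\Psi}(f^*\|\wh f_{j,1})+\big(\xi_T\vee\f{mG^2\eta^2T}{2\sigma_{\mb\Psi}\xi_T}\big)+\Theta_1\eta+\Theta_2T\eta^2$. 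I would then fix $\xi_T$ to be a constant that balances the $\huaT_{3,K}$ term, e.g.\ $\xi_T=G\sqrt{m/(2\sigma_{\mb\Psi})}$; with $\eta=1/\sqrt{T}$ one has $\eta^2T=1$, so $\Xi_T\le\bar\Xi$ for some constant $\bar\Xi$ independent of $T$ (the quantities $v_1,\Theta_1,\Theta_2$ are determined by the initialization and the network/problem constants only).

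The crucial step — and the reason no boundedness of $\huaW$ is needed — is to show that $V_T$ itself is bounded by a $T$-independent constant. Since every $\mb J(f_{\ell,t})-\mb J(f^*)\ge0$, discarding the first sum in the master inequality gives $\sum_{j}D_{\mb\Psi}(f^*\|\wh f_{j,K+1})\le V_K\Xi_T$, hence $\max_j D_{\mb\Psi}(f^*\|\wh f_{j,K+1})\le V_K\Xi_T$, for all $K\in[T]$. Recalling $v_{K+1}^{2}=\max\{v_K^{2},\max_j D_{\mb\Psi}(f^*\|\wh f_{j,K+1})\}$ and $V_K=\max\{\xi_T,v_K\}$, I would prove by induction on $K$ that $v_K\le M:=\max\{v_1,\xi_T,\bar\Xi\}$: if $v_K\le M$ then $V_K\le M$, whence $v_{K+1}^{2}\le\max\{M^{2},M\bar\Xi\}=M^{2}$. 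Therefore $V_T=\max\{\xi_T,v_T\}\le M$, a constant independent of $T$; the monotonicity of $\{V_t\}_{t=1}^{T}$ (used already in Proposition~\ref{huaT_2_est}) and the fact that $\Xi_T$ does not itself involve $V_T$ are what keep this argument non-circular.

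It then remains to extract the rate. Returning to the master inequality with $K=T$ and dropping the nonnegative telescoped Bregman term, I use $V_t\le V_T$ and $\mb J(f_{\ell,t})-\mb J(f^*)\ge0$ to obtain $\f{\eta}{V_T}\sum_{t=1}^{T}\big[\mb J(f_{\ell,t})-\mb J(f^*)\big]\le\Xi_T$; then convexity of $\mb J$ via Jensen's inequality applied to $\ww f_{\ell,T}=\f{1}{T}\sum_{t=1}^{T}f_{\ell,t}$ (each $f_{\ell,t}$ is a convex combination of the projected iterates $\wh f_{j,t}\in\huaW$, hence $f_{\ell,t}\in\huaW$ and $\ww f_{\ell,T}\in\huaW$) gives $\f{\eta T}{V_T}\big[\mb J(\ww f_{\ell,T})-\mb J(f^*)\big]\le\Xi_T$. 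Substituting $\eta=1/\sqrt{T}$ (so $\eta T=\sqrt{T}$) and the bounds $V_T\le M$, $\Xi_T\le\bar\Xi$ yields $\mb J(\ww f_{\ell,T})-\mb J(f^*)\le M\bar\Xi/\sqrt{T}=\huaO(1/\sqrt{T})$.

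I expect the main obstacle to be exactly the second paragraph. The quantity $V_T$ appears in the denominator on the left of the master inequality, so the bound is meaningful only once $V_T$ is shown to remain bounded; but since $\huaW$ may be unbounded and the stepsize is constant, the iterates $\wh f_{i,s}$ — and hence the divergences $D_{\mb\Psi}(f^*\|\wh f_{i,s})$ defining $v_t$ — have no obvious a priori bound. The self-referential estimate $\sum_j D_{\mb\Psi}(f^*\|\wh f_{j,K+1})\le V_K\Xi_T$, available precisely because Proposition~\ref{huaT_2_est} retains the negative telescoped term, is what breaks the circularity; getting the induction and the order of quantification over $T$, $K$ and $\xi_T$ straight is the delicate bookkeeping.
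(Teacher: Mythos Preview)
Your proposal is correct and follows essentially the same route as the paper: combine Propositions~\ref{zong_ine}--\ref{huaT_3_est} into the master inequality, run the self-bounding induction to show the normalizing sequence $V_K$ stays below a $T$-independent constant (the paper uses $S_T=\Xi_T$ itself as that constant, you use $M=\max\{v_1,\xi_T,\bar\Xi\}$, which coincides with $\bar\Xi$ anyway), and then extract the rate via $V_t\le V_T$ and Jensen. The choice $\xi_T=G\sqrt{m/(2\sigma_{\mb\Psi})}$ is exactly the paper's $\xi_T=\sqrt{mG^2\eta^2T/(2\sigma_{\mb\Psi})}$ after plugging $\eta^2T=1$.
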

\begin{proof}
Combining Proposition \ref{zong_ine} with Proposition \ref{huaT_1_est}-\ref{huaT_3_est}, we have, for any $\ell\in\huaV$ and any $K\in[T]$, 
\bes
&&\hspace{-0.5cm}\sum_{t=1}^K\f{\eta}{V_t}\Big[\mb J(f_{\ell,t})-\mb J(f^*)\Big]+\f{1}{V_K}\sum_{i=1}^mD_{\mb \Psi}(f^*\|\wh f_{i,K+1})\\
&&\hspace{-0.7cm}\leq \f{1}{v_1}\sum_{j=1}^mD_{\mb \Psi}(f^*\|\wh f_{j,1})+\xi_T\vee\left(\f{1}{\xi_T}\f{mG^2}{2\sigma_{\mb \Psi}}\eta^2T\right)+\Theta_1\eta+\Theta_2T\eta^2.\\
&&\hspace{-0.7cm}=:S_T.
\ees
From this inequality and the definition of $v_{K+1}$ in \eqref{v_def2}, we know that for any $K\in[T]$, it holds that
$\f{v_{K+1}^2}{V_K}\leq \f{1}{V_K}\sum_{i=1}^mD_{\mb \Psi}(f^*\|\wh f_{i,K+1})\leq S_T$,
which indicates that $v_{K+1}^2\leq V_KS_T$. Note that when $K=1$, due to 
$v_1\leq\f{1}{v_1}\sum_{j=1}^mD_{\mb \Psi}(f^*\|\wh f_{j,1})\leq S_T$,
 we know $V_1=\max\{v_1,\xi_T\}\leq S_T$. Now suppose that for some $K\in[T]$, $V_K\leq S_T$, then we have $V_{K+1}=\max\{V_K,v_{K+1}\}\leq\max\{V_K,\sqrt{V_KS_T}\}\leq S_T$. Hence an induction indicates that, for any $K\in[T]$, $V_K\leq S_T$. Then for any $\ell\in\huaV$ and any $K\in[T]$, 
 \bes
 &&\hspace{-0.5cm}\sum_{t=1}^K\Big[\mb J(f_{\ell,t})-\mb J(f^*)\Big]\leq\f{V_KS_T}{\eta}\leq\f{S_T^2}{\eta}.
 \ees
Specifically, when $K=T$, using the convexity of the functional $\mb J$ and dividing both sides of the above inequality by $T$, we have, for any $\ell\in\huaV$, $\mb J(\ww f_{\ell, T})-\mb J(f^*)\leq\f{S_T^2}{T\eta}$.
Taking $\xi_T=\sqrt{\f{mG^2\eta^2T}{2\sigma_{\mb \Psi}}}$, we have
\bes
&&\hspace{-0.8cm}\mb J(\ww f_{\ell, T})-\mb J(f^*)\\
&&\hspace{-0.8cm}\leq\f{1}{T\eta}\Bigg[ \f{1}{V_1}\sum_{j=1}^mD_{\mb \Psi}(f^*\|\wh f_{j,1})+\sqrt{\f{mG^2}{2\sigma_{\mb \Psi}}}\eta\sqrt{T}+\Theta_1\eta+\Theta_2T\eta^2\Bigg]^2.
\ees
After taking  stepsize $\eta=\f{1}{\sqrt{T}}$, we finally have, for any $\ell\in\huaV$,
\bes
\mb J(\ww f_{\ell,T})-\mb J(f^*)\leq\huaO\left(\f{1}{\sqrt{T}}\right).
\ees
We finish the proof.
\end{proof}

Till now, we have provided a complete convergence theory for DFMD. We have proven that the local ergodic sequence can achieve an optimal convergence rate of $\huaO\left(\f{1}{\sqrt{T}}\right)$ within the general framework of Banach spaces. It is worth noting that this analytical process does not require any boundedness conditions induced by the Banach norm for the decision space, even in the most general Banach spaces. Such conditions are typically necessary in most existing works within Euclidean spaces. In the next section, we will concentrate on the nonconvex optimization problems on Hilbert spaces and study the convergence theory of an effective DFGD method.

\section{Distributed functional nonconvex optimization over Hilbert spaces}
In this section, we study nonconvex DFO problem-related to the model \eqref{problem_formulation} when the local objective functions $\mb J_i$, $i\in\huaV$ can be nonconvex. We concentrate on the setting that the underlying space is a  Hilbert space
$\huaB=\huaH$.  In this section, without loss of generality and considering some typical important cases, we consider the case that the local objective functionals $\mb J_i$, $i\in\huaV$  are Fr\'echet differentiable as in Definition \ref{frechet_def}.  we also assume the following condition on the $L$-smoothness of $\mb J_i$, $i\in\huaV$ in order to delve  deeply into the nonconvex DFO problem.
\begin{ass} \label{Lsmooth_ass}
The	local objective functionals $\mb J_i$, $i\in\huaV$  are Fr\'echet differentiable on $\huaH$ and $L$-smooth.
\end{ass}
The $L$-smoothness condition has been widely adopted in a lot of existing work related to optimization and learning. 

For solving the nonconvex DFO problem \eqref{problem_formulation}, we aim to study the convergence theory of the  distributed functional gradient descent algorithm (DFGD) given by 
\bea
\textbf{DFGD}\left\{
\begin{aligned}
	h_{i,t}=&f_{i,t}-\eta\mb D_{f_{i,t}}\tb J_i,\\
	f_{i,t+1}=&\sum_{j=1}^m[P_t]_{ij}	h_{j,t}.
\end{aligned}
\right. 
\eea
Since for a real Hilbert space $\huaH$, Riesz representation theorem indicates that, for any continuous linear functional $\mb F\in\huaH$, there is a unique $f\in\huaH$ such that $\mb F(g)=\left\nn g, f\right\mm_\huaH$, which indicates $\huaH\cong\huaH^*$. Namely, the dual space of $\huaH$ is itself in the sense of isometric isomorphism. Hence, we know the Fr\'echet derivative $\mb D_{f_{i,t}}\mb J$ of $\mb J$ naturally serves an element of $\huaH$ which shows that the DFGD structure given above makes sense.

To establish the convergence theory of DFGD, we start with the following result on the estimate of $\mb J(\bar f_{t+1})-\mb J(\bar f_t)$.
\begin{pro}\label{basic_pro}
	Under Assumptions \ref{graph_ass} and \ref{Lsmooth_ass}, it holds that
\bes
&&\mb J(\bar f_{t+1})-\mb J(\bar f_t)\\
&&\leq\left(\f{\eta}{2}+L\eta^2\right)\sum_{i=1}^m\left\|\mb D_{f_{i,t}}\mb J_i-\mb D_{\bar f_t}\mb J_i\right\|_\huaH^2\\
&&-\left(\f{\eta}{2m}-\f{L\eta^2}{m}\right)\|\mb D_{\bar f_t}\mb J\|_\huaH^2.
\ees
\end{pro}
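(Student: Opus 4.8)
The plan is to reduce the estimate to the classical descent inequality applied to the \emph{averaged} iterate $\bar f_t$. First I would exploit the double stochasticity of $P_t$ from Assumption \ref{graph_ass}: since $\sum_{i=1}^m[P_t]_{ij}=1$, averaging the update $f_{i,t+1}=\sum_{j=1}^m[P_t]_{ij}h_{j,t}$ over $i\in\huaV$ gives $\bar f_{t+1}=\f{1}{m}\sum_{j=1}^m h_{j,t}$, and substituting $h_{j,t}=f_{j,t}-\eta\mb D_{f_{j,t}}\mb J_j$ yields the clean recursion
\bes
\bar f_{t+1}=\bar f_t-\f{\eta}{m}\sum_{j=1}^m\mb D_{f_{j,t}}\mb J_j .
\ees
Writing $S_t=\sum_{j=1}^m\mb D_{f_{j,t}}\mb J_j$, the step $\bar f_{t+1}-\bar f_t=-\f{\eta}{m}S_t$ is thus a perturbed full gradient step at $\bar f_t$, the perturbation being caused by each agent evaluating its gradient at $f_{j,t}$ rather than at $\bar f_t$.

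Next I would invoke the $L$-smoothness bound. By Assumption \ref{Lsmooth_ass} each $\mb J_i$ is Fr\'echet differentiable, so Definition \ref{Lsmooth_def} (with the subgradient taken to be the Fr\'echet derivative) gives $\mb J_i(f)-\mb J_i(f_0)\le\left\nn f-f_0,\mb D_{f_0}\mb J_i\right\mm_\huaH+\f{L}{2}\|f-f_0\|_\huaH^2$ for all $f,f_0\in\huaH$ --- notably no convexity of $\mb J_i$ is required, which is precisely why the nonconvex case is tractable. Applying this with $f=\bar f_{t+1}$, $f_0=\bar f_t$, summing over $i\in\huaV$, and using $\mb J=\sum_{i=1}^m\mb J_i$ together with $\mb D_{\bar f_t}\mb J=\sum_{i=1}^m\mb D_{\bar f_t}\mb J_i$, one obtains
\bes
\mb J(\bar f_{t+1})-\mb J(\bar f_t)\le-\f{\eta}{m}\left\nn S_t,\mb D_{\bar f_t}\mb J\right\mm_\huaH+\f{L\eta^2}{2m}\|S_t\|_\huaH^2 ,
\ees
where the quadratic term carries the factor $\f{L\eta^2}{2m}=\f{mL}{2}\cdot\f{\eta^2}{m^2}$ coming from the smoothness constant $mL$ of $\mb J=\sum_i\mb J_i$ and the $\f{1}{m}$ in the step.

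The last step is pure bookkeeping. Decompose $S_t=\mb D_{\bar f_t}\mb J+E_t$ with $E_t=\sum_{i=1}^m\big(\mb D_{f_{i,t}}\mb J_i-\mb D_{\bar f_t}\mb J_i\big)$, so that $\left\nn S_t,\mb D_{\bar f_t}\mb J\right\mm_\huaH=\|\mb D_{\bar f_t}\mb J\|_\huaH^2+\left\nn E_t,\mb D_{\bar f_t}\mb J\right\mm_\huaH$ and $\|S_t\|_\huaH^2\le 2\|\mb D_{\bar f_t}\mb J\|_\huaH^2+2\|E_t\|_\huaH^2$. I would bound the cross term by Young's inequality with parameter $1$, i.e. $-\left\nn E_t,\mb D_{\bar f_t}\mb J\right\mm_\huaH\le\f{1}{2}\|E_t\|_\huaH^2+\f{1}{2}\|\mb D_{\bar f_t}\mb J\|_\huaH^2$, and then use the Cauchy--Schwarz bound $\|E_t\|_\huaH^2\le m\sum_{i=1}^m\|\mb D_{f_{i,t}}\mb J_i-\mb D_{\bar f_t}\mb J_i\|_\huaH^2$. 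Collecting the coefficient of $\|\mb D_{\bar f_t}\mb J\|_\huaH^2$, which is $-\f{\eta}{m}+\f{\eta}{2m}+\f{L\eta^2}{m}=-(\f{\eta}{2m}-\f{L\eta^2}{m})$, and that of $\sum_{i=1}^m\|\mb D_{f_{i,t}}\mb J_i-\mb D_{\bar f_t}\mb J_i\|_\huaH^2$, where the factor $m$ from Cauchy--Schwarz cancels the $\f{1}{m}$ to leave $\f{\eta}{2}+L\eta^2$, reproduces exactly the claimed inequality. I do not expect a genuine obstacle; the only things needing care are tracking the two independent places a factor $m$ enters --- the smoothness constant of $\mb J$ becomes $mL$, and $\|E_t\|_\huaH^2$ picks up an $m$ --- and reading Definition \ref{Lsmooth_def} for the possibly nonconvex $\mb J_i$ with the subgradient understood as the Fr\'echet derivative.
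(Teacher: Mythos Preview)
Your proposal is correct and follows essentially the same route as the paper: derive $\bar f_{t+1}-\bar f_t=-\f{\eta}{m}\sum_{i}\mb D_{f_{i,t}}\mb J_i$ from double stochasticity, apply the $mL$-smoothness descent inequality for $\mb J$, decompose $S_t=\mb D_{\bar f_t}\mb J+E_t$, handle the cross term by Young's inequality and the quadratic term by $\|a+b\|_\huaH^2\le 2\|a\|_\huaH^2+2\|b\|_\huaH^2$ together with $\|E_t\|_\huaH^2\le m\sum_i\|\cdot\|_\huaH^2$. The only cosmetic difference is that the paper applies Young's inequality term-by-term (with weight $m$) to $\f{\eta}{m}\sum_i\|\mb D_{f_{i,t}}\mb J_i-\mb D_{\bar f_t}\mb J_i\|_\huaH\cdot\|\mb D_{\bar f_t}\mb J\|_\huaH$, whereas you apply it once to $-\langle E_t,\mb D_{\bar f_t}\mb J\rangle_\huaH$ and then invoke Cauchy--Schwarz; both yield exactly the same coefficients.
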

\begin{proof}
Since the functionals $\mb J_i$, $i\in\huaV$ are $L$-smooth, then it is easy to verify, $\mb J$ is $mL$-smooth. Accordingly, there holds
\bes
\mb J(\bar f_{t+1})\leq \mb J(\bar f_t)-\left\nn \bar f_{t+1}-\bar f_t, \mb D_{\bar f_t}\mb J\right\mm_\huaH+\f{mL}{2}\left\|\bar f_{t+1}-\bar f_t\right\|_\huaH^2.
\ees
According to the double stochasticity of the communication matrix $P_t$, we know
$\sum_{i=1}^mf_{i,t+1}=\sum_{i=1}^m\sum_{j=1}^m[P_t]_{ij}	h_{j,t}=\sum_{i=1}^mh_{i,t}$,
which further indicates
$\bar f_{t+1}-\bar f_t=-\eta\f{1}{m}\sum_{i=1}^m\mb D_{f_{i,t}}\tb J_i$. Then it follows that
\bes
&&\left\nn \bar f_{t+1}-\bar f_t, \mb D_{\bar f_t}\mb J\right\mm_\huaH\\
&&=-\eta\left\nn\f{1}{m}\sum_{i=1}^m\mb D_{f_{i,t}}\mb J_i,\mb D_{\bar f_t}\mb J\right\mm_\huaH\\
&&=-\f{\eta}{m}\sum_{i=1}^m\left\nn\mb D_{f_{i,t}}\mb J_i-\mb D_{\bar f_t}\mb J_i+\mb D_{\bar f_t}\mb J_i,\mb D_{\bar f_t}\mb J\right\mm_\huaH\\
&&=-\f{\eta}{m}\sum_{i=1}^m\left\nn \mb D_{f_{i,t}}\mb J_i-\mb D_{\bar f_t}\mb J_i,\mb D_{\bar f_t}\mb J\right\mm_\huaH-\f{\eta}{m}\left\|\mb D_{\bar f_t}\mb J\right\|_\huaH^2\\
&&\leq\f{\eta}{m}\sum_{i=1}^m\|\mb D_{f_{i,t}}\mb J_i-\mb D_{\bar f_t}\mb J_i\|_\huaH\cdot\|\mb D_{\bar f_t}\mb J\|_\huaH-\f{\eta}{m}\left\|\mb D_{\bar f_t}\mb J\right\|_\huaH^2.
\ees
Applying Young's inequality to the first term of the above inequality, we have
\bes
&&\f{\eta}{m}\sum_{i=1}^m\|\mb D_{f_{i,t}}\mb J_i-\mb D_{\bar f_t}\mb J_i\|_\huaH\cdot\|\mb D_{\bar f_t}\mb J\|_\huaH\\
&&\leq\f{\eta}{2}\sum_{i=1}^m\|\mb D_{f_{i,t}}\mb J_i-\mb D_{\bar f_t}\mb J_i\|_\huaH^2+\f{\eta}{2m}\|\mb D_{\bar f_t}\mb J\|_\huaH^2.
\ees
Combining this inequality with the above inequality, we have
\bes
&&\Big\nn \bar f_{t+1}-\bar f_t, \mb D_{\bar f_t}\mb J\Big\mm_\huaH\\
&&\leq\f{\eta}{2}\sum_{i=1}^m\|\mb D_{f_{i,t}}\mb J_i-\mb D_{\bar f_t}\mb J_i\|_\huaH^2-\f{\eta}{2m}\|\mb D_{\bar f_t}\mb J\|_\huaH^2.
\ees
On the other hand, the following estimates hold
\bes
&&\f{mL}{2}\left\|\bar f_{t+1}-\bar f_t\right\|_\huaH^2\\
&&\leq\f{mL\eta^2}{2}\left\|\f{1}{m}\sum_{i=1}^m\mb D_{f_{i,t}}\tb J_i\right\|_\huaH^2\\
&&=\f{L\eta^2}{2m}\left\|\sum_{i=1}^m\Big[\mb D_{f_{i,t}}\mb J_i-\mb D_{\bar f_t}\mb J_i\Big]+\mb D_{\bar f_t}\mb J\right\|_\huaH^2\\
&&\leq\f{L\eta^2}{m}\left\|\sum_{i=1}^m\Big[\mb D_{f_{i,t}}\mb J_i-\mb D_{\bar f_t}\mb J_i\Big]\right\|_\huaH^2+\f{L\eta^2}{m}\left\|\mb D_{\bar f_t}\mb J\right\|_\huaH^2\\
&&\leq L\eta^2\sum_{i=1}^m\left\|\mb D_{f_{i,t}}\mb J_i-\mb D_{\bar f_t}\mb J_i\right\|_\huaH^2+\f{L\eta^2}{m}\left\|\mb D_{\bar f_t}\mb J\right\|_\huaH^2.
\ees
After combining these estimates, we obtain the desired result.
\end{proof}

In the following, for technical consideration, we require the well-known Polyak-\L ojasiewicz (P\L) condition for Fr\'echet differentiable functional.
\begin{defi}
	A real functional $\mb F:\huaH\rightarrow\mbb R$ is called $\mu$-Polyak-\L ojasiewicz ($\mu$-P\L) if for some constant $\mu>0$,
	\bes
	\f{1}{2}\left\|\mb D_f \mb F\right\|_\huaH^2\geq\mu\left(\mb F(f)-\mb F(f^*)\right), \ \forall f\in \huaH,
	\ees
	where $f^*$ is the global minimizer of the functional $\mb F$.
\end{defi}

Inspired by the convergence expressions in classical non-convex optimization in Euclidean space, for any agent $\ell\in\huaV$, we use the average rate $\f{1}{T}\sum_{t=1}^T\|\mb D_{f_{\ell,t}}\mb J\|_{\huaH}^2$ to describe the convergence performance of DFGD in the framework of Hilbert spaces in the next theorem.
\begin{thm}
Under Assumptions \ref{graph_ass}, \ref{gradient_ass}, and \ref{Lsmooth_ass}. If the stepsize $\eta$ satisties	$\eta=\f{1}{2L\sqrt{T+3}}$, then for any $\ell\in\huaV$, we have
	\bes
	\f{1}{T}\sum_{t=1}^T\|\mb D_{f_{\ell,t}}\mb J\|_{\huaH}^2\leq\huaO\left(\f{1}{\sqrt{T}}\right).
	\ees
	Moreover, when the global functional $\mb J$ satisfies  $\mu$-Polyak-\L ojasiewicz condition, then for any $\ell\in\huaV$,
	\bes
	\f{1}{T}\sum_{t=1}^T\Big[\mb J(f_{\ell,t})-\mb J(f^*)\Big]\leq\huaO\left(\f{1}{\sqrt{T}}\right).
	\ees
\end{thm}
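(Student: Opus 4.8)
The plan is to reduce the whole argument to the one-step descent inequality of Proposition \ref{basic_pro}, supplying the two missing pieces: a consensus bound for the DFGD iterates and a telescoping estimate over $t=1,\dots,T$. First I would establish the consensus estimate. Setting $p_{i,t}=-\eta\mb D_{f_{i,t}}\mb J_i$, the DFGD recursion reads $f_{i,t+1}=\sum_{j=1}^m[P_t]_{ij}(f_{j,t}+p_{j,t})$, which is the same iteration analyzed in the proof of Lemma \ref{consensus_lemma}. Since each $\mb J_i$ is Fr\'echet differentiable and Assumption \ref{gradient_ass} gives $\|\mb D_{f_{i,t}}\mb J_i\|_\huaH\le G$, we have $\|p_{i,t}\|_\huaH\le G\eta$, so the same transition-matrix expansion together with Lemma \ref{network_nedic_lem} yields $\|f_{i,t}-\bar f_t\|_\huaH\le\ooo\gamma^{t-1}\sum_{j=1}^m\|f_{j,1}\|_\huaH+\frac{m\ooo G\gamma}{1-\gamma}\eta$. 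Squaring, summing over $t\le T$, and using $\sum_{t\ge1}\gamma^{2(t-1)}=(1-\gamma^2)^{-1}$, this produces a bound of the form $\sum_{i=1}^m\sum_{t=1}^T\|f_{i,t}-\bar f_t\|_\huaH^2\le A+B\eta^2 T$, with $A,B$ depending only on $\ooo,\gamma,G,m$ and the initialization.

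Then I would feed this into Proposition \ref{basic_pro}. The relation $\|\mb D_{f_{i,t}}\mb J_i-\mb D_{\bar f_t}\mb J_i\|_\huaH\le L\|f_{i,t}-\bar f_t\|_\huaH$ (from $L$-smoothness) turns the proposition into an inequality whose right-hand side involves only the consensus error and a telescoping gap; summing over $t=1,\dots,T$ and using $\mb J(\bar f_{T+1})\ge\mb J(f^*)$ gives $(\frac{\eta}{2m}-\frac{L\eta^2}{m})\sum_{t=1}^T\|\mb D_{\bar f_t}\mb J\|_\huaH^2\le\mb J(\bar f_1)-\mb J(f^*)+(\frac{\eta}{2}+L\eta^2)L^2(A+B\eta^2 T)$. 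The choice $\eta=\frac{1}{2L\sqrt{T+3}}$ forces $L\eta\le\frac14$, hence $\frac{\eta}{2m}-\frac{L\eta^2}{m}\ge\frac{\eta}{4m}>0$; dividing by $\frac{\eta T}{4m}$ and noting $\eta T\asymp\sqrt T$ and $\eta^2\asymp T^{-1}$, every term on the right becomes $\huaO(1/\sqrt T)$, which establishes $\frac1T\sum_{t=1}^T\|\mb D_{\bar f_t}\mb J\|_\huaH^2=\huaO(1/\sqrt T)$.

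Finally I would pass from the average $\bar f_t$ to an arbitrary agent $\ell$. Since $\mb J$ is $mL$-smooth, $\|\mb D_{f_{\ell,t}}\mb J\|_\huaH^2\le 2(mL)^2\|f_{\ell,t}-\bar f_t\|_\huaH^2+2\|\mb D_{\bar f_t}\mb J\|_\huaH^2$; averaging over $t$, the first term is $\huaO(1/T)$ by the consensus bound and the second is $\huaO(1/\sqrt T)$ by the previous paragraph, which gives the first claim. For the P\L\ statement, the P\L\ inequality gives $\mb J(\bar f_t)-\mb J(f^*)\le\frac1{2\mu}\|\mb D_{\bar f_t}\mb J\|_\huaH^2$, so the same estimate yields $\frac1T\sum_{t=1}^T(\mb J(\bar f_t)-\mb J(f^*))=\huaO(1/\sqrt T)$; transferring to agent $\ell$ uses $mL$-smoothness and Young's inequality in the form $\mb J(f_{\ell,t})-\mb J(\bar f_t)\le\frac{mL+1}{2}\|f_{\ell,t}-\bar f_t\|_\huaH^2+\frac12\|\mb D_{\bar f_t}\mb J\|_\huaH^2$, both of whose $t$-averages are $\huaO(1/\sqrt T)$.

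The hard part will be the telescoping step: the consensus error enters there at order $\eta^2 T$, so its residual after dividing by $\eta T$ is $\huaO(\eta^2)$, which stays at the optimal rate only because the stepsize is chosen at the $1/\sqrt T$ scale. The precise form $\eta=\frac{1}{2L\sqrt{T+3}}$ is exactly what is needed to simultaneously keep the descent coefficient $\frac{\eta}{2m}-\frac{L\eta^2}{m}$ bounded below by a constant multiple of $\eta$ and to collapse all error contributions to $\huaO(1/\sqrt T)$; the $\sqrt{T+3}$ rather than $\sqrt T$ is a minor device making these inequalities valid for all $T\ge1$.
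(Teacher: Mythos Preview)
Your proposal is correct and follows essentially the same route as the paper: establish the consensus estimate for DFGD as the special case of Lemma \ref{consensus_lemma} with $\mb\Psi(f)=\tfrac12\|f\|_\huaH^2$, plug it into Proposition \ref{basic_pro} via $L$-smoothness, telescope, use $L\eta\le\tfrac14$ to keep the descent coefficient bounded below by $\tfrac{\eta}{4m}$, and then transfer from $\bar f_t$ to $f_{\ell,t}$ through the Lipschitz property of $\mb D\mb J$. The only noticeable difference is in the P\L\ step: the paper simply applies the P\L\ inequality directly at $f_{\ell,t}$ (since $\tfrac1T\sum_t\|\mb D_{f_{\ell,t}}\mb J\|_\huaH^2$ has already been bounded), which is slightly more direct than your detour through $\bar f_t$ and the $mL$-smoothness estimate; both routes give the same $\huaO(1/\sqrt T)$ conclusion.
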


\begin{proof}
	According to the selection of the stepsize $\eta$, we know that $\eta\leq\f{1}{4L}$. Hence based on Proposition \ref{basic_pro}, a simple computation yields that
\bes
\beal
\f{1}{4m}\eta\|\mb D_{\bar f_t}\mb J\|_\huaH^2\leq&\Big[\mb J(\bar f_t)-\mb J(\bar f_{t+1})\Big]\\
&+\f{3}{4}\eta\sum_{i=1}^m\left\|\mb D_{f_{i,t}}\mb J_i-\mb D_{\bar f_t}\mb J_i\right\|_\huaH^2.
\eeal
\ees
After taking summation from $t=1$ to $t=T$ and using the $L$-smoothness of the functional $\mb J_i$ and the fact $\mb J(f^*)\leq\mb J(\bar f_{T+1})$, we have
\bes
\beal
\f{1}{4m}\eta\sum_{t=1}^T\|\mb D_{\bar f_t}\mb J\|_\huaH^2\leq&\mb J(\bar f_1)-\mb J(f^*)\\
&+\f{3}{4}L^2\eta\sum_{t=1}^T\sum_{i=1}^m\left\|f_{i,t}-\bar f_t\right\|_\huaH^2.
\eeal
\ees
Applying Lemma \ref{consensus_lemma} to the case that $\huaB=\huaW=\huaH$ (recall that, in the standard Hilbert norm topology, it is both open and closed as a whole) and $\mb \Psi(f)=\f{1}{2}\|f\|_\huaH^2$, we know
\bes
\beal
\left\|f_{i,t}-\bar f_{t}\right\|_\huaH
\leq&\ooo\gamma^{t-1}\sum_{i=1}^m\|f_{i,1}\|_\huaH+m\ooo G\sum_{s=1}^{t-1}\gamma^{t-s}\eta\\
\leq&\ooo\gamma^{t-1}\sum_{i=1}^m\|f_{i,1}\|_\huaH+\f{m\ooo G}{1-\gamma}\eta.
\eeal
\ees
Then it follows that
\bea
\left\|f_{i,t}-\bar f_{t}\right\|_\huaH
^2\leq2\ooo^2\Big[\sum_{i=1}^m\|f_{i,1}\|_\huaH\Big]^2\gamma^{2(t-1)}+2\Big[\f{m\ooo G}{1-\gamma}\Big]^2\eta^2.\label{consensus_square_basic}
\eea
This estimate further indicates that
\bea
\beal
\sum_{t=1}^T\sum_{i=1}^m\left\|f_{i,t}-\bar f_{t}\right\|_\huaH^2\leq&\f{2m\ooo^2\Big[\sum_{i=1}^m\|f_{i,1}\|_\huaH\Big]^2}{1-\gamma^2}\\
&+2m\Big[\f{m\ooo G}{1-\gamma}\Big]^2\eta^2T. \label{sum_consensus}
\eeal
\eea
Combining above estimates, we arrive at
\bes
&&\eta\sum_{t=1}^T\|\mb D_{\bar f_t}\mb J\|_\huaH^2\leq4m\Big[\mb J(\bar f_1)-\mb J(f^*)\Big]\\
&&+\f{6m^2\ooo^2\Big[\sum_{i=1}^m\|f_{i,1}\|_\huaH\Big]^2L^2\eta}{1-\gamma^2}+6m^2\Big[\f{m\ooo G}{1-\gamma}\Big]^2L^2\eta^3T.\\
\ees
After setting the constants $\Delta_1=4m[\mb J(\bar f_1)-\mb J(f^*)]$, $\Delta_2=\f{6m^2\ooo^2[\sum_{i=1}^m\|f_{i,1}\|_\huaH]^2L^2}{1-\gamma^2}$, $\Delta_3=\f{3}{2}m^2[\f{m\ooo G}{1-\gamma}]^2L$,
we have
\bes
\sum_{t=1}^T\|\mb D_{\bar f_t}\mb J\|_\huaH^2\leq\f{\Delta_1}{\eta}+\Delta_2+\Delta_3\eta T.
\ees
On the other hand, for any $\ell\in\huaV$, it holds that
\bes
&&\sum_{t=1}^T\|\mb D_{f_{\ell,t}}\mb J\|_{\huaH}^2=\sum_{t=1}^T\left\|\Big[\mb D_{f_{\ell,t}}\mb J-\mb D_{\bar f_{t}}\mb J\Big]+\mb D_{\bar f_{t}}\mb J\right\|_{\huaH}^2\\
&&\leq\sum_{t=1}^T2\|\mb D_{f_{\ell,t}}\mb J-\mb D_{\bar f_{t}}\mb J\|_{\huaH}^2+2\sum_{t=1}^T\|\mb D_{\bar f_{t}}\mb J\|_{\huaH}^2\\
&&\leq\sum_{t=1}^T2L^2\|f_{\ell,t}-\bar f_{t}\|_{\huaH}^2+2\sum_{t=1}^T\|\mb D_{\bar f_{t}}\mb J\|_{\huaH}^2.
\ees
In the last inequality, we have used the basic equivalence mentioned after Definition \ref{Lsmooth_def}. By following the same procedures of getting \eqref{sum_consensus}, we have
$\sum_{t=1}^T\left\|f_{i,t}-\bar f_{t}\right\|_\huaH^2\leq\Delta_4+\Delta_5\eta^2T$
with $\Delta_4=\f{2\ooo^2[\sum_{i=1}^m\|f_{i,1}\|_\huaB]^2}{1-\gamma^2}$, $\Delta_5=2[\f{m\ooo G}{1-\gamma}]^2$. 
Combining the above estimates, we finally arrive at, for any $\ell\in\huaV$,
\bes
\sum_{t=1}^T\|\mb D_{f_{\ell,t}}\mb J\|_{\huaH}^2\leq\f{2\Delta_1}{\eta}+2\Delta_2+2\Delta_3\eta T+2L^2\Delta_4+2L^2\Delta_5\eta^2T.
\ees
Divide both sides by $T$, we have
\bes
\f{1}{T}\sum_{t=1}^T\|\mb D_{f_{\ell,t}}\mb J\|_{\huaH}^2\leq\f{2\Delta_1}{T\eta}+\f{2\Delta_2}{T}+2\Delta_3\eta +\f{2L^2\Delta_4}{T}+2L^2\Delta_5\eta^2.
\ees
Due to the selection of the stepsize $\eta=\f{1}{2L\sqrt{T+3}}$, we have, for any $\ell\in\huaV$,
$\f{1}{T}\sum_{t=1}^T\|\mb D_{f_{\ell,t}}\mb J\|_{\huaH}^2\leq\huaO\left(\f{1}{\sqrt{T}}\right)$.
A direct result under  Polyak-\L ojasiewicz condition finally shows, for any $\ell\in\huaV$,
\bes
\f{1}{T}\sum_{t=1}^T\Big[\mb J(f_{\ell,t})-\mb J(f^*)\Big]\leq\huaO\left(\f{1}{\sqrt{T}}\right).
\ees
Thus, we have completed the proof.
\end{proof}

In the subsequent analysis, for  of the last iteration of the local sequence generated from DFGD associated with any agent, we study its related linear convergence rate.  We establish an $R$-linear convergence rate  up to a solution level that is proportional to stepsize.

\begin{thm}
Under Assumptions \ref{graph_ass}, \ref{gradient_ass}, and \ref{Lsmooth_ass}. If the global functional $\mb J$ satisfies the $\mu$-Polyak-\L ojasiewicz condition and the stepsize $\eta$ satisfies	$0<\eta<\min\{\f{2m}{\mu},\f{1}{4L}\}$, then for any $\ell\in\huaV$, we have
	$\mb J( f_{\ell,t})-\mb J(f^*)\leq C\nu^{t-1}+\Omega \eta$
	with $C,\Omega\in(0,\infty)$ and $\nu\in(0,1)$.
\end{thm}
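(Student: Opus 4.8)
The plan is to reduce the claim to a scalar linear recursion for the centroid optimality gap $\Delta_t := \mathbf{J}(\bar f_t)-\mathbf{J}(f^*)$, drive that recursion with the geometrically decaying consensus estimate already obtained, and then transfer the resulting bound from $\bar f_t$ to an arbitrary agent $\ell$. First I would revisit Proposition \ref{basic_pro}: since $0<\eta<\frac{1}{4L}$, exactly the simplification carried out in the previous theorem gives $\frac{\eta}{4m}\|\mathbf{D}_{\bar f_t}\mathbf{J}\|_{\huaH}^2\le[\mathbf{J}(\bar f_t)-\mathbf{J}(\bar f_{t+1})]+\frac{3\eta L^2}{4}\sum_{i=1}^m\|f_{i,t}-\bar f_t\|_{\huaH}^2$, where the last step invokes the Lipschitz-gradient equivalence of $L$-smoothness recorded after Definition \ref{Lsmooth_def}. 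Applying the $\mu$-P\L{} inequality to $\mathbf{J}$ in the form $\|\mathbf{D}_{\bar f_t}\mathbf{J}\|_{\huaH}^2\ge 2\mu\Delta_t$ and rearranging yields $\Delta_{t+1}\le\rho\,\Delta_t+\frac{3\eta L^2}{4}\sum_{i=1}^m\|f_{i,t}-\bar f_t\|_{\huaH}^2$ with $\rho:=1-\frac{\mu\eta}{2m}$; the hypothesis $\eta<\frac{2m}{\mu}$ ensures $\rho\in(0,1)$. Substituting \eqref{consensus_square_basic} summed over $i$ (Lemma \ref{consensus_lemma} with $\mathbf{\Psi}=\frac12\|\cdot\|_{\huaH}^2$, so $\sigma_{\mathbf{\Psi}}=1$, applicable under Assumption \ref{gradient_ass}) turns this into $\Delta_{t+1}\le\rho\,\Delta_t+c_1\eta\,\gamma^{2(t-1)}+c_2\eta^3$ for explicit constants $c_1,c_2$ depending only on $m,L,\ooo,\gamma,G$ and $\sum_i\|f_{i,1}\|_{\huaH}$.

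Unrolling produces $\Delta_t\le\rho^{t-1}\Delta_1+c_1\eta\sum_{k=1}^{t-1}\rho^{t-1-k}\gamma^{2(k-1)}+c_2\eta^3\sum_{k=0}^{t-2}\rho^k$. The last sum is at most $(1-\rho)^{-1}=\frac{2m}{\mu\eta}$, so it only contributes an $\huaO(\eta^2)$ residual. The middle term is the convolution of two geometric sequences; a short case split on the sign of $\rho-\gamma^2$ (with a $(t-1)\rho^{t-2}$ factor in the equality case) bounds it by $C_0\,\tilde\nu^{\,t-1}$ for any fixed $\tilde\nu\in(\max\{\rho,\gamma^2\},1)$. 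Since $\eta$ is bounded by $\eta_{\max}:=\min\{\frac{2m}{\mu},\frac{1}{4L}\}$, the factor $\eta$ multiplying this term is absorbed into the constant, and altogether $\Delta_t\le C_{\bar f}\,\tilde\nu^{\,t-1}+\Omega_{\bar f}\,\eta^2$.

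To pass from the centroid to an arbitrary agent $\ell$, I would write $\mathbf{J}(f_{\ell,t})-\mathbf{J}(f^*)=\Delta_t+[\mathbf{J}(f_{\ell,t})-\mathbf{J}(\bar f_t)]$ and use $mL$-smoothness of $\mathbf{J}$ to bound $\mathbf{J}(f_{\ell,t})-\mathbf{J}(\bar f_t)\le\|f_{\ell,t}-\bar f_t\|_{\huaH}\,\|\mathbf{D}_{\bar f_t}\mathbf{J}\|_{\huaH}+\frac{mL}{2}\|f_{\ell,t}-\bar f_t\|_{\huaH}^2$. The gradient-domination consequence of $mL$-smoothness (compare $\mathbf{J}\left(\bar f_t-\frac{1}{mL}\mathbf{D}_{\bar f_t}\mathbf{J}\right)$ with $\mathbf{J}(f^*)$) gives $\|\mathbf{D}_{\bar f_t}\mathbf{J}\|_{\huaH}^2\le 2mL\Delta_t$. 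Inserting the single-agent consensus bound $\|f_{\ell,t}-\bar f_t\|_{\huaH}^2\le A'\gamma^{2(t-1)}+E'\eta^2$ from \eqref{consensus_square_basic} and the estimate for $\Delta_t$ from the previous step, and expanding products of sums under square roots via $\sqrt{(a+b)(c+d)}\le\sqrt{ac}+\sqrt{ad}+\sqrt{bc}+\sqrt{bd}$, one finds that every resulting term is either a pure geometric power (with base among $\tilde\nu$, $\gamma^2$, $\gamma$, $\tilde\nu^{1/2}$, $(\gamma^2\tilde\nu)^{1/2}$) or carries a factor $\eta$ or $\eta^2$. Collecting every geometric base under a single $\nu\in(0,1)$ larger than all of them, and merging every non-decaying residual into one term via $\eta^2\le\eta_{\max}\eta$, delivers $\mathbf{J}(f_{\ell,t})-\mathbf{J}(f^*)\le C\nu^{t-1}+\Omega\eta$ with $C,\Omega\in(0,\infty)$ and $\nu\in(0,1)$.

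The step I expect to be the main obstacle is the geometric-convolution bookkeeping in the middle: controlling $\sum_k\rho^{t-1-k}\gamma^{2(k-1)}$ cleanly is delicate because $\rho=1-\frac{\mu\eta}{2m}$ depends on $\eta$ and can be made arbitrarily close to $1$, so one must either treat $\eta$ as fixed and then declare the corresponding $\nu\in(0,1)$, or, if an $\eta$-uniform rate is desired, sacrifice part of the decay margin; this is compounded by the need to keep the bookkeeping of the many $\huaO(\eta)$ and $\huaO(\eta^2)$ cross terms in the last step tight enough that no non-decaying residual worse than $\huaO(\eta)$ survives.
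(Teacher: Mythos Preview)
Your proposal is correct and follows the same overall skeleton as the paper (Proposition~\ref{basic_pro} $+$ P\L{} $\Rightarrow$ linear recursion for the centroid gap, then feed in the consensus estimate \eqref{consensus_square_basic}, then transfer to agent $\ell$), but you make two tactical choices that the paper avoids and that account for all the bookkeeping you flag as the ``main obstacle''.

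First, for the convolution $\sum_{k}\rho^{t-1-k}\gamma^{2(k-1)}$ the paper does not split cases on $\rho$ versus $\gamma^2$ at all: it simply uses $\rho^{t-1-k}\le 1$ and bounds the whole sum by $\sum_{s\ge0}\gamma^{2s}=\frac{1}{1-\gamma^2}$, so the $c_1\eta\gamma^{2(t-1)}$ driving term is absorbed directly into the residual $\Omega\eta$ rather than into the geometric part. This kills the $\eta$-dependence issue you were worried about and reduces the centroid bound to the single-line
\[
\mathbf J(\bar f_t)-\mathbf J(f^*)\le\Big(1-\tfrac{\mu\eta}{2m}\Big)^{t-1}\big[\mathbf J(\bar f_1)-\mathbf J(f^*)\big]+\tfrac{\Delta_6}{1-\gamma^2}\eta+\tfrac{2m\Delta_7}{\mu}\eta^2.
\]
Your route is slightly sharper (it keeps the $\Delta_6$ contribution in the decaying part and gives an $\huaO(\eta^2)$ centroid residual), but at the cost of exactly the delicate case analysis you anticipated.

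Second, for the transfer $\mathbf J(f_{\ell,t})-\mathbf J(\bar f_t)$ the paper does not use $mL$-smoothness and gradient domination; it simply invokes Assumption~\ref{gradient_ass} to get $\mathbf J(f_{\ell,t})-\mathbf J(\bar f_t)\le G\|f_{\ell,t}-\bar f_t\|_{\huaH}$ and then plugs in Lemma~\ref{consensus_lemma} directly, yielding one $\gamma^{t-1}$ term and one $\eta$ term with no cross products or square roots. This sidesteps all the $\sqrt{(a+b)(c+d)}$ expansion. The final constants are then just $\nu=\max\{1-\tfrac{\mu\eta}{2m},\gamma\}$ and $C=\max\{\mathbf J(\bar f_1)-\mathbf J(f^*),\Delta_9\}$. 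Your smoothness-based transfer also works, but the Lipschitz route is the intended shortcut here.
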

\begin{proof}
	Proposition \ref{basic_pro} and the $\mu$-Polyak-\L ojasiewicz condition of $\mb J$ imply that
	\bes
	&&\Big[\mb J(\bar f_{t+1})-\mb J(f^*)\Big]-\Big[\mb J(\bar f_t)-\mb J(f^*)\Big]\\
	&&\leq\left(\f{\eta}{2}+L\eta^2\right)L^2\sum_{i=1}^m\left\|f_{i,t}-\bar f_t\right\|_\huaH^2\\
	&&\quad -\left(\f{\eta}{2m}-\f{L\eta^2}{m}\right)2\mu\Big[\mb J(\bar f_t)-\mb J(f^*)\Big].
	\ees
	Then, according to the range of the stepsize $\eta$, we have
	\bes
	&&\Big[\mb J(\bar f_{t+1})-\mb J(f^*)\Big]\\
	&&\leq\left[ 1-\left(\f{1}{2m}-\f{L\eta}{m}\right)2\mu\eta\right]\Big[\mb J(\bar f_t)-\mb J(f^*)\Big]\\
	&&\quad+\left(\f{\eta}{2}+L\eta^2\right)L^2\sum_{i=1}^m\left\|f_{i,t}-\bar f_t\right\|_\huaH^2\\
	&&\leq\left( 1-\f{\mu}{2m}\eta\right)\Big[\mb J(\bar f_t)-\mb J(f^*)\Big]+\f{3L^2}{4}\eta\sum_{i=1}^m\left\|f_{i,t}-\bar f_t\right\|_\huaH^2.
	\ees
	By directly using \eqref{consensus_square_basic}, we have
	\bes
	\f{3L^2}{4}\eta\sum_{i=1}^m\left\|f_{i,t}-\bar f_{t}\right\|_\huaH
	^2\leq\Delta_6\gamma^{2(t-1)}\eta+\Delta_7\eta^3,
	\ees
	with
$\Delta_6=	\f{6L^2\ooo^2m}{4}\Big[\sum_{i=1}^m\|f_{i,1}\|_\huaH\Big]^2$, $\Delta_7=\f{3L^2m}{2}\Big[\f{m\ooo G}{1-\gamma}\Big]^2$.
Substitute this estimate into the above inequality, it can be easily derived that
\bes
&&\Big[\mb J(\bar f_{t+1})-\mb J(f^*)\Big]\\
&&\leq\left( 1-\f{\mu}{2m}\eta\right)\Big[\mb J(\bar f_t)-\mb J(f^*)\Big]+\Delta_6\gamma^{2(t-1)}\eta+\Delta_7\eta^3.
\ees	
Direct iteration and using the fact that $1-\f{\mu}{2m}\eta<1$, we have
\bes
&&\hspace{-0.5cm}\Big[\mb J(\bar f_{t+1})-\mb J(f^*)\Big]\\
&&\hspace{-0.5cm}\leq\left( 1-\f{\mu}{2m}\eta\right)^2\Big[\mb J(\bar f_{t-1})-\mb J(f^*)\Big]+\left( 1-\f{\mu}{2m}\eta\right)\Delta_6\gamma^{2(t-2)}\eta\\
&&\hspace{-0.5cm}\quad+\left( 1-\f{\mu}{2m}\eta\right)\Delta_7\eta^3 +\Delta_6\gamma^{2(t-1)}\eta+\Delta_7\eta^3\\
&&\hspace{-0.5cm}\leq\left( 1-\f{\mu}{2m}\eta\right)^3\Big[\mb J(\bar f_{t-2})-\mb J(f^*)\Big]+\left( 1-\f{\mu}{2m}\eta\right)^2\Delta_6\gamma^{2(t-3)}\eta\\
&&\hspace{-0.5cm}\quad+\left( 1-\f{\mu}{2m}\eta\right)^2\Delta_7\eta^3+\left( 1-\f{\mu}{2m}\eta\right)\Delta_6\gamma^{2(t-2)}\eta\\
&&\hspace{-0.5cm}\quad+\left( 1-\f{\mu}{2m}\eta\right)\Delta_7\eta^3 +\Delta_6\gamma^{2(t-1)}\eta+\Delta_7\eta^3\\
&&\hspace{-0.5cm}\cdots\\
&&\hspace{-0.5cm}\leq\left( 1-\f{\mu}{2m}\eta\right)^t\Big[\mb J(\bar f_{1})-\mb J(f^*)\Big]+\f{\Delta_6}{1-\gamma^2}\eta+\f{2m\Delta_7}{\mu}\eta^2.
\ees	
That is to say,
\bes
&&\hspace{-0.5cm}\Big[\mb J(\bar f_{t})-\mb J(f^*)\Big]\\
&&\hspace{-0.5cm}	\leq\left( 1-\f{\mu}{2m}\eta\right)^{t-1}\Big[\mb J(\bar f_{1})-\mb J(f^*)\Big]+\f{\Delta_6}{1-\gamma^2}\eta+\f{2m\Delta_7}{\mu}\eta^2.
\ees
Then for any $\ell\in\huaV$, we have
\bes
&&\mb J( f_{\ell,t})-\mb J(f^*)\\
&&=\Big[\mb J( \bar f_t)-\mb J( f^*)\Big]+\Big[\mb J( f_{\ell,t})-\mb J( \bar f_t)\Big]\\
&&\leq \left( 1-\f{\mu}{2m}\eta\right)^{t-1}\Big[\mb J(\bar f_{1})-\mb J(f^*)\Big]\\
&&\quad+\f{\Delta_6}{1-\gamma^2}\eta+\f{2m\Delta_7}{\mu}\eta^2+G\left\|f_{\ell,t}-\bar f_t\right\|_\huaH\\
&&\leq\left( 1-\f{\mu}{2m}\eta\right)^{t-1}\Big[\mb J(\bar f_{1})-\mb J(f^*)\Big]\\
&&\quad+\f{\Delta_6}{1-\gamma^2}\eta+\f{2m\Delta_7}{\mu}\eta^2+\Delta_{8}\eta+\Delta_9\gamma^{t-1}
\ees
with 
$\Delta_8=\f{m\ooo G^2}{1-\gamma}$,  $\Delta_9=\ooo G\sum_{i=1}^m\|f_{i,1}\|_\huaH$.
After setting the constant
$\Omega=\f{\Delta_6}{1-\gamma^2}+\f{4m^2\Delta_7}{\mu^2}+\Delta_8$,
 we have
\bes
&&\mb J( f_{\ell,t})-\mb J(f^*)\\
&&\leq\left( 1-\f{\mu}{2m}\eta\right)^{t-1}\Big[\mb J(\bar f_{1})-\mb J(f^*)\Big]+\Omega\eta+\Delta_9\gamma^{t-1}.
\ees
After further taking 
$C=\max\left\{\mb J(\bar f_1)-\mb J(f^*),\Delta_9\right\}$, $\nu=\max\left\{ 1-\f{\mu}{2m}\eta,\gamma\right\}$,
we finally arrive at, for any $\ell\in\huaV$,
\bes
\mb J( f_{\ell,t})-\mb J(f^*)\leq C\nu^{t-1}+\Omega \eta
\ees
with $C\in(0,\infty)$ and $\nu\in(0,1)$. We finish the proof.
	\end{proof}

\section{Typical scenarios}\label{section5}
In this section, we introduce some  typical settings from  machine learning and statistical learning that are closely related to the theory established in this work. We provide the formulations related to the DFMD or DFGD algorithms over time-varying networks established in this work.  Although these algorithms have not been carefully studied independently in existing work, it has been ensured based on the convergence theory in this work, that these algorithms demonstrate good convergence performances and application values in these situations as direct corollaries.
\subsection{Distributed functional optimization over measure spaces}
In scenarios of machine learning, many problems can be formulated as optimizing a convex functional over a vector space of measures. For example, in Bayesian inference, it is typical to optimize the Kullback-Leibler divergence with respect to the target, which relates to the posterior distribution of the parameters of interest \cite{akl2022}. In distribution regression,  we often need to deal with regression schemes based on probability measures as samples \cite{yh2024}. In learning theory based on infinite-width one hidden layer neural network, the problem can often be reduced to the variational problem on the probability distributions over the neural network parameters \cite{cb2018}. These problems can be easily transformed into functional optimization models and solved using the time-varying multi-agent DFO algorithmic framework provided in this paper. 

A common framework can be described as follows: suppose that $X$ is a compact metric space of $\mbb R^n$ with the naturally induced Euclidean topology, consider the space of Radon measures ($\huaM_r(X)$, $\|\cdot\|_{\text{TV}}$) supported on $\huaC$ with the total variation (TV) norm $\|\cdot\|_{\text{TV}}$. The goal is to minimize a real convex functional $\mb J:\huaC\rightarrow\mbb R$, where $\huaC$ is a closed convex set of the space $\huaM_r(X)$. Here, we consider the setting that $\mb J$ can be decomposed as a summation of local functionals $\mb J_i$, $i\in\huaV$, which can be handled by a multi-agent system described in this paper. In practice, many problems are interested in the case that $\huaC=\huaP(X)$, the set of all  probability distributions on $X$ \cite{yhy2024}. If $\mb J_i$, $i\in\huaV$ are proper, strictly convex, and G\^ateaux differentiable functionals on $\huaM_r(X)$, $\partial_{\mu_{i,t}}\mb J_i$ serves as a bounded linear functional on $\huaM_r(X)$. According to our established theory, for an appropriately selected  mirror map $\mb \Psi$, our DFMD on measure space reduces to the form of\\

\noi  \textbf{MS-DFMD:}
\bes
\left\{
\begin{aligned}
	\nu_{i,t+1}=&\arg\min_{\mu\in\huaP(X)}\left\{\partial_{\mu_{i,t}}\mb J_i(\mu-\mu_{i,t})+\f{1}{\eta}D_{\mb \Psi}(\mu\|\mu_{i,t})\right\},\\
	\mu_{i,t+1}=&\sum_{j=1}^m[P_t]_{ij} \nu_{j,t+1}.
\end{aligned}  
\right. 
\ees
With the theory of Section \ref{DFMD} at hand, we know that the ergodic sequence $\ww \mu_{\ell,T}=\f{1}{T}\sum_{t=1}^T\mu_{\ell,t}$ of probability distributions generated from the above MS-DFMD algorithm is able to achieve a convergence rate of $\huaO(1/\sqrt{T})$ for seeking a target probability distribution $\mu^*$. This indicates that the algorithm MS-DFMD has potentially important application values in core areas of machine learning based on probability measure space as underlying space, such as distribution regression with  samples consisting of probability distributions \cite{yh2024} and functional neural networks over Wasserstein space \cite{syz2025}.

\begin{figure}[t] 
	\centering
	\begin{minipage}{0.2\textwidth} 
		\centering
		\includegraphics[width=\linewidth]{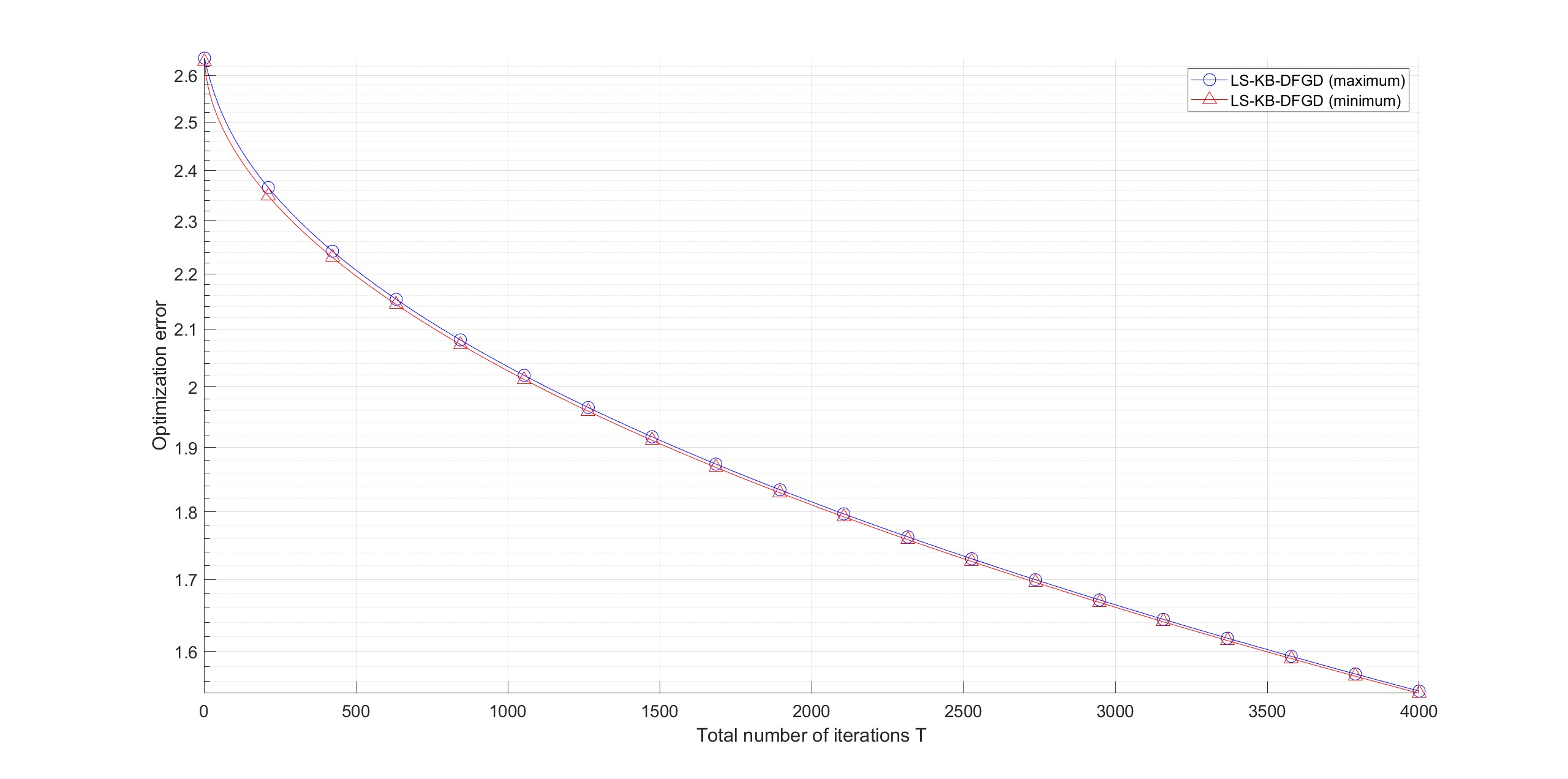}
		\caption{\tiny{Maximum and minimum of the optimization errors across all agents versus the total number
				of iterations $T$ of the LS-KB-DFGD algorithm.}}
		\label{figure1}
	\end{minipage}
	\hfill 
	\begin{minipage}{0.2\textwidth}
		\centering
		\includegraphics[width=\linewidth]{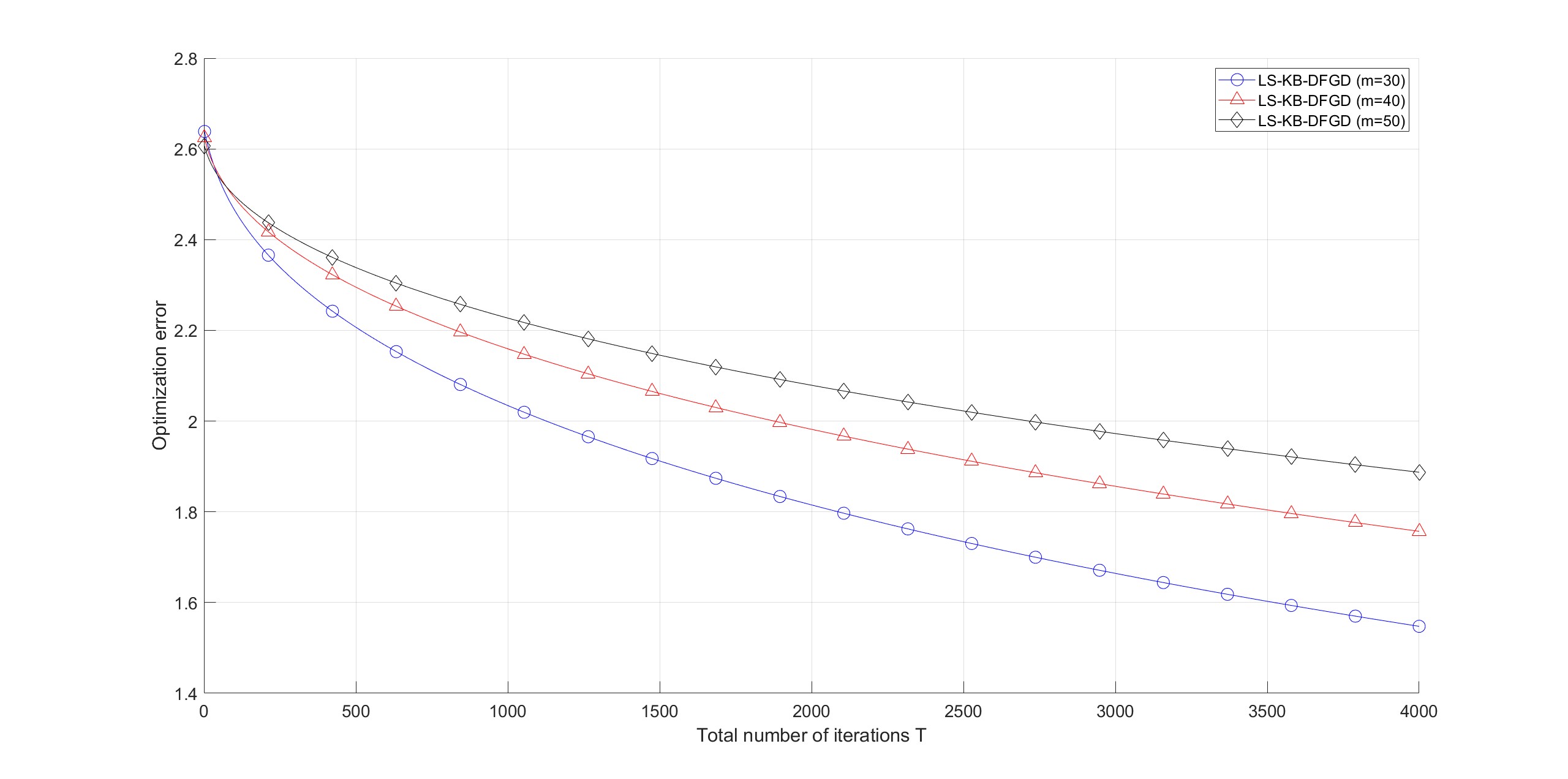}
		\caption{\tiny{Maximum of the optimization errors versus the total number of iterations $T$ of the LS-KB-DFGD algorithm for three different choices of the number of agents $m$.}}
		\label{figure2}
	\end{minipage}
	\hfill
	\begin{minipage}{0.2\textwidth}
		\centering
		\includegraphics[width=\linewidth]{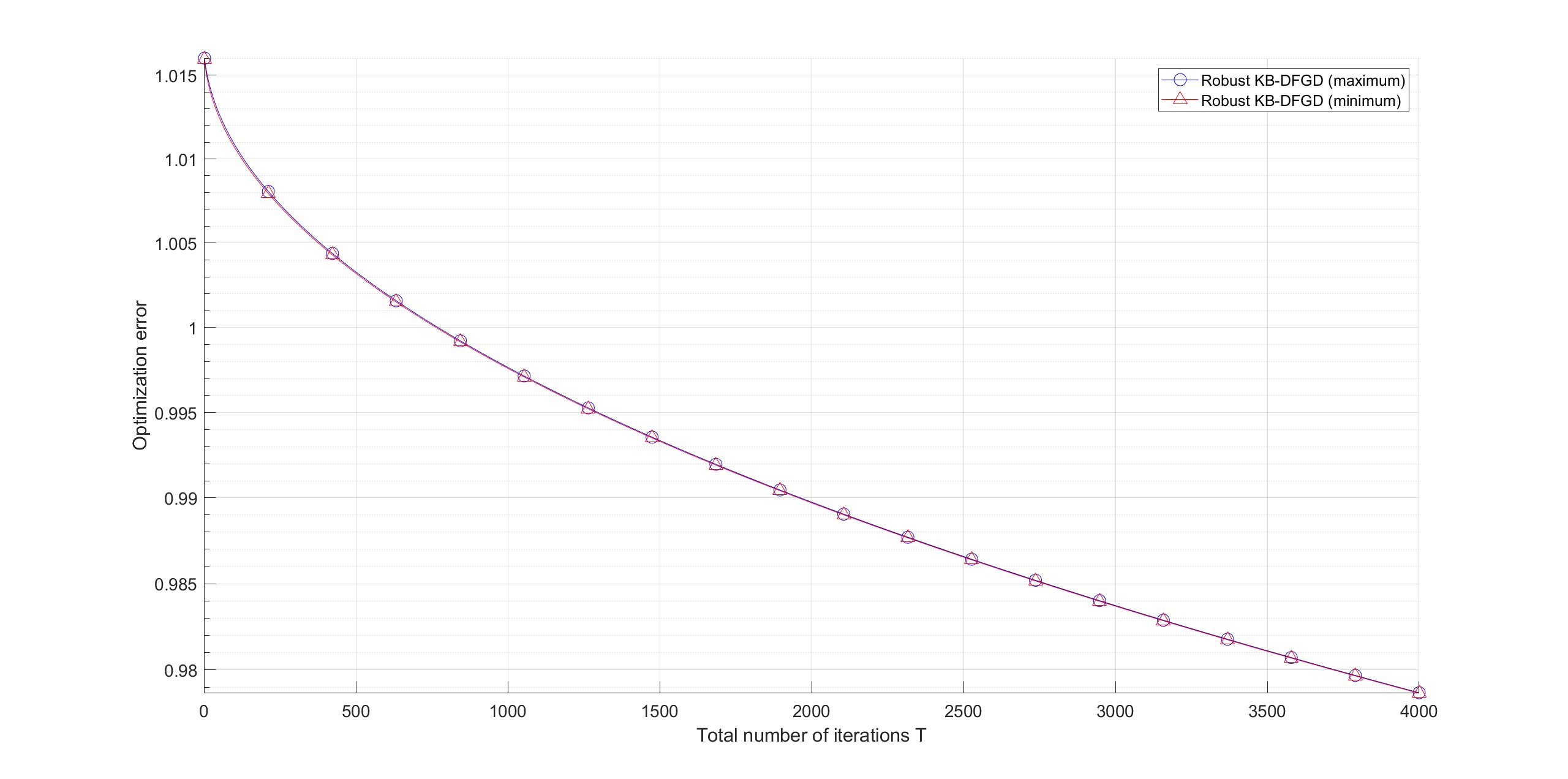}
		\caption{\tiny{Maximum and minimum of the optimization errors across all agents versus the total number
				of iterations $T$ of the Robust KB-DFGD algorithm.}}
		\label{figure4}
	\end{minipage}
\end{figure}

\subsection{Distributed functional optimization over reproducing kernel Hilbert spaces}

In statistical learning and machine learning, kernel-based learning occupies a crucial position. The typical problem model in kernel-based learning theory is to minimize the expected risk of the prediction function defined by
\bes
\mb R(f)=\int_{X\times Y}\huaL(f(x)-y)d\rho(x,y)
\ees
in a reproducing kernel Hilbert space $H_K$ induced by a Mercer kernel $K$ (namely, continuous, symmetric and positive semi-definite). Here,  $\huaL$ represents an appropriately smooth loss function and $\rho$ is an unknown probability distribution supported on $X\times Y$ that governs the sampling process. However, in most cases, the measure $\rho$ is unknown, accordingly, the corresponding regression function that minimizes the above functional $\mb R$ is also unknown. Hence we are only able to realize the learning task via training samples. That is, to optimize the functional optimization problem related to the empirical risk defined by
\bes
\min_{f\in H_K}\wh {\mb R}(f)=\sum_{i=1}^m\wh {\mb R}_i(f)
\ees
where the local functional $\mb{\wh R}_i: H_K\rightarrow\mbb R$ is the empirical risk associated with the agent $i\in\huaV$ defined by
\bes
\wh  {\mb R}_i(f)=\f{1}{n_i}\sum_{s=1}^{n_i}\huaL(f(x_{i,s}),y_{i,s})+\f{\lambda_i}{2}\|f\|_{H_K}^2.
\ees
Here,  $\f{\lambda_i}{2}\|f\|_{H_K}^2$ represents an RKHS regularization term with appropriate scaling parameter $\lambda_i>0$. $D_i=\{(x_{i,s},y_{i,s})\}_{s=1}^{n_i}\subseteq X\times Y$ is the sample data set with  cardinality $n_i$ drawn according to the unknown probability distribution $\rho$ and only observable by their associated local agent $i\in\huaV$ ($i$ only has access to $D_i$). Inspired by the theory of this work, we know that a fully decentralized kernel-based DFGD (KB-DFGD) in RKHS can be   designed as\\

\noi \textbf{KB-DFGD:}
\bes
\left\{
\begin{aligned}
	h_{i,t+1}=&f_{i,t}-\eta\left[\f{1}{n_i}\sum_{s=1}^{n_i}\huaL'(f_{i,t}(x_{i,s}),y_{i,s})K_{x_{i,s}}+\lambda_i f_{i,t}\right],\\
	f_{i,t+1}=&\sum_{j=1}^m[P_t]_{ij}	h_{j,t+1}.
\end{aligned}
\right. 
\ees
Here, we have used the notation $K_x=K(x,\cdot)$ for a Mercer kernel. The reason for the structure of the first step of the above KB-DFGD is due to the Fr\'echet derivative of $\wh {\mb R}_i$ satisfies
\bes
\mb D_f \wh {\mb R}_i=\f{1}{n_i}\sum_{s=1}^{n_i}\huaL'(f(x_{i,s}),y_{i,s})K_{x_{i,s}}+\lambda_i f_{i,t}.
\ees
The loss function $\huaL$ here can be flexibly chosen according to the requirements of different learning tasks. The regularization parameter $\lambda_i=0$ corresponds to the regularization-free case. For examples, when handling standard least squares regression learning problems, the standard least squares loss is selected as
$\huaL(f(x),y)=(f(x)-y)^2$, according to our theory, a regularization-free least squares KB-DFGD (LS-KB-DFGD) can be given as\\

\noi \textbf{LS-KB-DFGD:}
\bes
\left\{
\begin{aligned}
	h_{i,t+1}=&f_{i,t}-\eta\left[\f{1}{n_i}\sum_{s=1}^{n_i}(f_{i,t}(x_{i,s})-y_{i,s})K_{x_{i,s}}\right],\\
	f_{i,t+1}=&\sum_{j=1}^m[P_t]_{ij}	h_{j,t+1}.
\end{aligned}
\right. 
\ees

 In order to handle some robust learning tasks (see e.g. \cite{ghs2018}), $\huaL$ can be selected as a robust loss form in terms of $\huaL_\sigma(f(x),y)=\sigma^2W(\f{(f(x)-y)^2}{\sigma^2})$ with $W:\mbb R_+\rightarrow\mbb R$ being appropriately selected windowing function and $\sigma>0$ being some robustness parameter. For example, the Welsch loss $\huaL_\sigma(u)=\sss^2[1-\exp(-\f{u^2}{2\sss^2})]$ (nonconvex), the Cauchy loss $\huaL_\sigma(u)=\sss^2[\log(1+\f{u^2}{2\sss^2})]$ (nonconvex) and the Fair loss $\huaL_\sss(u)=\sss^2[\f{|u|}{\sss}-\log(1+\f{|u|}{\sss})]$ (see e.g. \cite{ghs2018}). Accordingly,  our KB-DFGD algorithm reduces to the following robust kernel-based DFGD form\\ 
 
\noi \textbf{Robust KB-DFGD:}
\bes
\left\{
\begin{aligned}
	h_{i,t+1}=&f_{i,t}-\f{\eta}{n_i}\sum_{s=1}^{n_i}W'(\theta_{i,t,\sss}(x_{i,s},y_{i,s}))(f_{i,t}(x_{i,s})-y_{i,s})K_{x_{i,s}},\\
	f_{i,t+1}=&\sum_{j=1}^m[P_t]_{ij}	h_{j,t+1}.
\end{aligned}
\right. 
\ees
with $\theta_{i,t,\sss}(x,y)=\f{(f_{i,t}(x)-y)^2}{\sss^2}$. The unified theoretical framework established in this work has indicated that, under mild conditions, LS-KB-DFGD and robust KB-DFGD possess nice convergence performance. Although the convergence theory of these algorithms has not been established separately in existing work, as a corollary of the theory in this paper, good convergence performances of these algorithms have been ensured under mild conditions.

 Till now, we have already provided several new decentralized kernel learning approaches via the time-varying multi-agent system based on the theory developed in this paper. In practical applications related to kernel-based learning, to facilitate the actual operation and adjustment of algorithms, we can apply methods such as the representer theorem \cite{shs2001}, random feature techniques \cite{sd2016} or Nystr\"om approximation approaches \cite{fls2025} to transition to algorithmic forms that are easier to handle in real-world scenarios.

\section{Numerical experiments}



In this section, we conduct numerical experiments to illustrate various features of the distributed convex and nonconvex functional optimization over Banach spaces, with examples on the distributed kernel-based functional optimization within reproducing kernel Hilbert spaces. Our first experiment investigates the LS-KB-DFGD algorithm, which employs the standard least squares loss:
\begin{eqnarray}
\min_{f\in H_K} \mb J(f)= \frac{1}{m} \sum_{i=1}^m  \frac{1}{n_i} \sum_{s=1}^{n_i} \left(f(x_{i,s})-y_{i,s}\right)^2, 
\end{eqnarray}
where $x_{i,s} \in \mathbb R^d$ and $y_{i,s} \in \mathbb R$ are data known only to the agent $i$. We adopt the following configurations in our simulation examples. The Mercer kernel $K$ is chosen as the Gaussian kernel $K(x,y)= \exp\left(-\frac{\|x-y\|_2^2}{\gamma^2}\right)$, $x,y \in \mathbb{R}^d$. The parameters in LS-KB-DFGD are chosen as follows: we consider the number of agents $m=30$, the number of samples in each agent $n_1= n_2 = \cdots = n_m = 10$, the input dimension $d=10$, the bandwidth of the Gaussian kernel $\gamma=0.33$. Each element of the input vector $x_{i,s}$ is generated from a uniform distribution over the interval $[-1,1]$, and the response is generated by
$y_{i,s} = \langle a, x_{i,s} \rangle + \epsilon$,
where $[a]_i=1$ for $1 \leq i \leq \lfloor d/2 \rfloor$ and $0$ otherwise, and the noise $\epsilon$ is drawn i.i.d. from a normal distribution $\mathcal N(0,1)$. The initial functions of each agent are chosen as $f_{i,1}=0$ for $i \in \{1,2,\dots,m\}$. The stepsize $\eta$ is selected as $\eta=\f{1}{\sqrt{T}}$. In our experiments, we evaluate the optimization error  $\mb J(\ww f_{\ell,T})-\mb J(f^*)$ ($\ell \in \huaV$), where $\ww f_{\ell,T}=\f{1}{T}\sum_{t=1}^Tf_{\ell,t}$. 

We illustrate the convergence behavior of the LS-KB-DFGD algorithm by plotting the maximum and minimum of the optimization errors $\{\mb J(\ww f_{\ell,T})-\mb J(f^*)\}_{\ell=1}^{m}$ across $m$ agents, against the total number of iterations $T$. The results are depicted in Fig. \ref{figure1}, demonstrating that all agents in the LS-KB-DFGD algorithm converge at a consistent rate.
Next, we examine how the network size (number of agents $m$) affects the convergence of the LS-KB-DFGD algorithm. We analyze three distinct numbers of agents in a ring network: $m = 30$, $m=40$, and $m = 50$, and present the graphs of the maximum of the optimization errors across all agents versus the total number of iterations $T$. The results illustrated in Fig. \ref{figure2} indicate that the LS-KB-DFGD algorithm is more effective with a smaller network size.


In our second experiment, we examine the Robust KB-DFGD algorithm, where the nonconvex Cauchy loss $\huaL_\sigma(u)=\sss^2[\log(1+\f{u^2}{2\sss^2})]$ is considered:
\begin{eqnarray}
\min_{f\in H_K} \mb J(f) = \frac{1}{m} \sum_{i=1}^m   \frac{1}{n_i} \sum_{s=1}^{n_i} \huaL_\sigma\left(f(x_{i,s})-y_{i,s}\right).
\end{eqnarray}
The configurations remain consistent with the previous experiments, with the exception that we set the scale parameter $\sigma=1$. Moreover, we introduce two outliers for the first two data in each two agents, specifically $\tilde{y}_{i,1}= y_{i,1}+5$ and $\tilde{y}_{i,2}= y_{i,2}-5$, for $i \in \{1,\dots, m\}$.
We demonstrate the convergence characteristics of the Robust KB-DFGD algorithm by plotting the maximum and minimum of the optimization errors across $m$ agents against the total number of iterations $T$. The results are displayed in Fig. \ref{figure4}, indicating that all agents within the Robust KB-DFGD algorithm converge at a comparable rate. 

\section{Conclusion}
In this paper, we systematically present a new distributed functional optimization (DFO) framework  in Banach spaces, based on time-varying multi-agent systems. We have studied both convex and nonconvex DFO problems. For these two categories, we have established comprehensive convergence theories for DFMD in Banach spaces and DFGD in Hilbert spaces, yielding  satisfactory convergence rates. These include a convergence rate of $\huaO(\f{1}{\sqrt{T}})$ for the ergodic sequence DFMD in Banach spaces under the convex setting, an average rate of $\huaO(\f{1}{\sqrt{T}})$ for the DFGD in Hilbert spaces under the nonconvex setting, and an $R$-linear convergence rate up to a solution level that is proportional to stepsize for only the last iteration of any local state variable. Furthermore, we demonstrates the broad applicability of our theory in machine learning, statistical learning such as MS-DFMD on Radon measure spaces and KS-DFGD, LS-KB-DFGD, robust KB-DFGD on RKHSs. Finally, experiments are conducted to show the good performances of the proposed algorithms.

\vspace{-14 mm}


\begin{thebibliography}{999}
\bibitem{no2009}
A. Nedi\'{c}, A. Ozdaglar, ``Distributed subgradient methods for multi-agent optimization.'' \emph{IEEE Transactions on Automatic Control}, 54(1), 48-61, 2009.

\bibitem{yyw2019}
T. Yang, X. Yi, J. Wu, Y. Yuan, D. Wu, Z. Meng, Y. Hong, H. Wang, Z. Lin, K. H. Johansson.  ``A survey of distributed optimization.'' \emph{Annual Reviews in Control}. 47, 278-305, 2019.




	
	
	





\bibitem{jam2012}
J. C. Duchi, A. Agarwal, M. J. Wainwright. ``Dual averaging for distributed optimization: convergence analysis and network scaling.'' \emph{IEEE Transactions on Automatic Control}, 57(3), 592-606, 2012.

\bibitem{ql2017}
G. Qu, N. Li. ``Harnessing smoothness to accelerate distributed optimization.'' \emph{IEEE Transactions on Control of Network Systems}, 5(3), 1245-1260, 2017.





\bibitem{rnv2010}
S. Sundhar Ram, A. Nedi\'c, V. V.  Veeravalli. ``Distributed stochastic subgradient projection algorithms for convex optimization.'' \emph{Journal of Optimization Theory and Applications}, 147, 516-545, 2010.

\bibitem{ylyxhcj2025}
X. Yi, X. Li, T. Yang, L. Xie, Y. Hong, T. Chai,  K. H. Johansson. ``Distributed Online Convex Optimization with Time-Varying Constraints: Tighter Cumulative Constraint Violation Bounds under Slater's Condition.'' \emph{IEEE Transactions on Automatic Control}, 2025.

\bibitem{bnot2014}
A. Beck, A. Nedi\'c, A. Ozdaglar, M. Teboulle. An $O (1/k)$ gradient method for network resource allocation problems. \emph{IEEE Transactions on Control of Network Systems}, 1(1), 64-73, 2014.

\bibitem{yuanbook}
D. Yuan, A. Proutiere, G. Shi. ``Multi-agent online optimization.'' \emph{Foundations and Trends in Optimization}. 7(2-3), 81-263.

\bibitem{lu2024}
K. Lu. ``Online distributed algorithms for online noncooperative games with stochastic cost functions: high probability bound of regrets.'' \emph{IEEE Transactions on Automatic Control}, 69(12), pp. 8860-8867, 2024.



\bibitem{yhhx2020}
D. Yuan, Y. Hong, D. W. C. Ho, S. Xu. ``Distributed mirror descent for online composite optimization.'' \emph{IEEE Transactions on Automatic Control}, 66(2), 714-729, 2010.



\bibitem{yhy2022}
Z. Yu, D. W. C Ho,  D. Yuan. ``Distributed randomized gradient-free mirror descent algorithm for constrained optimization.'' \emph{IEEE Transactions on Automatic Control}, 67(2), 957 - 964, Feb 2022.

\bibitem{llww2018}
J. Li, G. Li, Z. Wu, C. Wu. ``Stochastic mirror descent method for distributed multi-agent optimization.'' \emph{Optimization Letters}, 12, pp. 1179-1197, 2018.

\bibitem{md1}
J. Li, G. Chen, Z. Dong,   Z. Wu. ``Distributed mirror descent method for multi-agent optimization with delay.'' \emph{Neurocomputing}, 177, 643-650, 2016.



\bibitem{lj2021}
J. Liu, Z. Yu, D. W. C Ho. ``Distributed constrained optimization with delayed subgradient information over time-varying network under adaptive quantization.''  \emph{IEEE Transactions on Neural Networks and Learning Systems}, 35(1), 143-156, 2022.



\bibitem{xzhyx2022}
M. Xiong, B. Zhang, D. W. C. Ho, D. Yuan, S. Xu. ``Event-triggered distributed stochastic mirror descent for convex optimization.'' \emph{IEEE Transactions on Neural Networks and Learning Systems}, 34(9), 6480-6491, 2022.


















\bibitem{llfh2023}
J. Li, C. Li, J. Fan, T. Huang.  ``Online distributed stochastic gradient algorithm for nonconvex optimization with compressed communication.'' \emph{IEEE Transactions on Automatic Control}, 69(2), 936-951, 2023.







\bibitem{yhl2016}
P. Yi, Y. Hong and F. Liu, ``Initialization-free distributed algorithms for optimal resource
allocation with feasibility constraints and its application to economic dispatch of power systems,'' \emph{Automatica}, vol. 74, pp. 259-269, 2016.



































































\bibitem{yfsz2024}
Z. Yu, J. Fan, Z. Shi, D. X. Zhou. ``Distributed gradient descent for functional learning.'' \emph{IEEE Transactions on Information Theory}, 70(9), 6547 - 6571, 2024.

\bibitem{yh2024}
Z. Yu, D. W. C. Ho. ``Estimates on learning rates for multi-penalty distribution regression.'' \emph{Applied and Computational Harmonic Analysis}, 69, 101609, 2024.


\bibitem{luenberger1997}
D. G. Luenberger. ``Optimization by vector space methods.'' John Wiley \& Sons, 1997.

\bibitem{Zalinescu2002}
C. Zalinescu. ``Convex analysis in general vector spaces.'' \emph{World scientific}, 2002.

\bibitem{kbp2024}
A. Kumar, M. Belkin, P. Pandit. ``Mirror Descent on Reproducing Kernel Banach Spaces.'' \emph{arXiv preprint arXiv:2411.11242}, 2024.



\bibitem{ks2004}
R. J. Kozick and B. M. Sadler, ``Source localization with distributed
sensor arrays and partial spatial coherence.'' \emph{IEEE Transactions on
Signal Processing}, vol. 52, no. 3, pp. 601-616, 2004.

\bibitem{akl2022}
 P. C. Aubin-Frankowski, A. Korba, F. L\'eger. ``Mirror descent with relative smoothness in measure spaces, with application to sinkhorn and EM.'' \emph{Advances in Neural Information Processing Systems}, 35, 17263-17275, 2022.



\bibitem{cb2018}
L. Chizat, F. Bach. ``On the global convergence of gradient descent for over-parameterized models using optimal transport.'' \emph{Advances in neural information processing systems}, 31, 2018.

\bibitem{yhy2024}
R. Yao, L. Huang, Y. Yang. ``Minimizing convex functionals over space of probability measures via KL-divergence gradient flow.'' \emph{International Conference on Artificial Intelligence and Statistics}, pp. 2530-2538. PMLR, 2024.

\bibitem{ghs2018}
 Z.  Guo, T. Hu, L. Shi.  ``Gradient descent for robust kernel-based regression.'' \emph{Inverse Problems}, 34(6), 065009, 2018.

\bibitem{shs2001}
B. Sch\"olkopf, R. Herbrich,  A. J. Smola. ``A generalized representer theorem.''  \emph{International Conference on Computational Learning Theory}. pp. 416-426, 2001.




\bibitem{ghw2020}
X. Guo, T. Hu, Q. Wu. ``Distributed minimum error entropy algorithms.'' \emph{Journal of Machine Learning Research}, 21(126), 1-31, 2020.

\bibitem{gsw2017}
Z.  Guo, L. Shi, Q. Wu. ``Learning theory of distributed regression with bias corrected regularization kernel network.'' \emph{Journal of Machine Learning Research}, 18(118), 1-25, 2017.

\bibitem{lz2018}
Y. Lei, D. X. Zhou. ``Analysis of online composite mirror descent algorithm.'' \emph{Neural computation}, 29(3), 825-860, 2017.

\bibitem{hljl2025}
T. Hu, X. Liu, K. Ji, Y. Lei. ``Convergence of Adaptive Stochastic Mirror Descent.'' \emph{IEEE Transactions on Neural Networks and Learning Systems}.



\bibitem{Dieudonne_book}
J. Dieudonn\'e. ``Foundations of modern analysis.'' Read Books Ltd.,2011.


\bibitem{syz2025}
Z. Shi, Z. Yu, D. X. Zhou. ``Learning theory of distribution regression with neural networks.'' \emph{Constructive Approximation}, 62, 61-104, 2025.



\bibitem{sd2016}
A. Sinha,  J. C. Duchi. ``Learning kernels with random features.'' \emph{Advances in neural information processing systems}, 29, 2016.

\bibitem{fls2025}
J. Fan, J. Liu, L. Shi. ``Nystr\"om subsampling for functional linear regression.'' \emph{Journal of Approximation Theory}, 310, 106176, 2025.


\end{thebibliography}
\end{document}